\documentclass[11pt]{amsart}
\usepackage{graphicx,amscd,color}
\usepackage{a4wide}

\theoremstyle{plain}
\newtheorem{theorem}{Theorem}[section]

\newtheorem{lemma}[theorem]{Lemma}

\theoremstyle{remark}
\newtheorem{remark}[theorem]{Remark}
\newtheorem{claim}{Claim}

\begin{document}

\title [Non-minimal bridge position of $2$-cable links]
{Non-minimal bridge position of $2$-cable links}

\author[J. H. Lee]{Jung Hoon Lee}
\address{Department of Mathematics and Institute of Pure and Applied Mathematics,
Jeonbuk National University, Jeonju 54896, Korea}
\email{junghoon@jbnu.ac.kr}

\subjclass[2010]{Primary: 57M25}
% MSC2020 57K10
\keywords{}

\begin{abstract}
Suppose that every non-minimal bridge position of a knot $K$ is perturbed.
We show that if $L$ is a $(2, 2q)$-cable link of $K$, then
every non-minimal bridge position of $L$ is also perturbed.
\end{abstract}

\maketitle

\section{Introduction}\label{sec1}

A knot in $S^3$ is said to be in {\em bridge position} with respect to a bridge sphere,
the original notion introduced by Schubert \cite{Schubert}, if
the knot intersects each of the $3$-balls bounded by the bridge sphere in a collection of $\partial$-parallel arcs.
It is generalized to knots (and links) in $3$-manifolds with the development of Heegaard splitting theory, and
is related to many interesting problems concerning, e.g. bridge number, (Hempel) distance, and incompressible surfaces in $3$-manifolds.

From any $n$-bridge position,
we can always get an $(n+1)$-bridge position by creating a new local minimum point and a nearby local maximum point of the knot.
A bridge position isotopic to one obtained in this way is said to be {\em perturbed}.
(It is said to be {\em stabilized} in some context.)
A bridge position of the unknot is unique in the sense that
any $n$-bridge ($n > 1$) position of the unknot is perturbed \cite{Otal1}.
The uniqueness also holds for $2$-bridge knots \cite{Otal2}.
See also \cite{Scharlemann-Tomova}, where all bridge surfaces for $2$-bridge knots are considered.
Ozawa \cite{Ozawa} showed that non-minimal bridge positions of torus knots are perturbed.
Zupan \cite{Zupan} showed such property for iterated torus knots and iterated cables of $2$-bridge knots.
More generally, he showed that if $K$ is an mp-small knot and every non-minimal bridge position of $K$ is perturbed, then
every non-minimal bridge position of a $(p,q)$-cable of $K$ is also perturbed \cite{Zupan}.
(Here, a knot is {\em mp-small} if its exterior contains no essential meridional planar surface.)
We remark that there exist examples of a knot with a non-minimal bridge position that is not perturbed \cite{Ozawa-Takao} and
furthermore, knots with arbitrarily high index bridge positions that are not perturbed \cite{JKOT}.

In this paper, we consider non-minimal bridge position of $2$-cable links of a knot $K$
without the assumption of mp-smallness of $K$.

\begin{theorem} \label{thm1}
Suppose that $K$ is a knot in $S^3$ such that every non-minimal bridge position of $K$ is perturbed.
Let $L$ be a $(2,2q)$-cable link of $K$.
Then every non-minimal bridge position of $L$ is also perturbed.
\end{theorem}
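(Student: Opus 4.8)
The plan is to produce a \emph{cancelling pair} of bridge disks for the given position of $L$, since it is standard that a bridge position is perturbed precisely when it admits such a pair, namely an upper disk and a lower disk meeting $L$ in a single common point. Let $\Sigma$ realize a non-minimal bridge position of $L$ of bridge number $n$, so that $n > b(L)$ and $L$ meets $\Sigma$ in $2n$ points. The central object will be the cabling annulus $A\subset N(K)$ with $\partial A = L$ and core isotopic to $K$: recall that for a $(2,2q)$-cable the two components of $L$ are parallel $(1,q)$-curves on $\partial N(K)$ cobounding $A$, that $A$ is essential in the exterior of $L$ when $K$ is nontrivial, and that Schubert's bridge-number inequality gives $b(L)=2\,b(K)$. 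First I would isotope $A$ to minimize $|A\cap\Sigma|$, discarding circles of $A\cap\Sigma$ that are trivial in $A$ and inessential in $\Sigma - L$, so that the surviving intersection consists of essential spanning arcs of $A$ together with whatever circles and boundary-parallel arcs cannot be removed.

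The heart of the argument is the \emph{clean case}, in which $A\cap\Sigma$ is a union of essential spanning arcs only. These arcs are mutually parallel in $A$ and cut $A$ into rectangles, each lying in $B_+$ or $B_-$ and having two opposite sides on $L$ (one sub-arc of each component) and two opposite sides among the spanning arcs. Counting endpoints shows there are exactly $n$ spanning arcs, so the core $c\cong K$ of $A$ meets $\Sigma$ in $n$ points, and the rectangle structure exhibits each arc of $c\cap B_\pm$ as boundary-parallel; thus $c$ inherits a genuine bridge position of $K$ of bridge number $m=n/2$. Since $n > b(L)=2\,b(K)$ we obtain $m > b(K)$, so this induced bridge position of $K$ is non-minimal, and by hypothesis it is perturbed. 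The resulting cancelling pair of bridge disks for $K$ is a local feature sitting in a neighborhood of one arc of $c$, hence it occurs in the parallel copies forming $L$ and can be lifted along the rectangle structure to a cancelling pair of bridge disks for $L$. This shows $L$ is perturbed in the clean case.

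It remains to reduce the general configuration to the clean case, and this is where I expect the main difficulty to lie, since it is exactly the step at which Zupan's argument invokes mp-smallness of $K$ to control the planar pieces $A\cap B_\pm$. Here I would instead exploit that $A$ has only two boundary circles and $L$ only two components, so that the combinatorics of $A\cap\Sigma$ is severely constrained. For a boundary-parallel arc of $A\cap\Sigma$, an outermost-disk argument produces a bridge disk for one component of $L$; I would show that either two such outermost disks lying on opposite sides of $\Sigma$ share an endpoint and directly give a cancelling pair, so that $L$ is already perturbed, or one of them guides an isotopy of $A$ that strictly reduces $|A\cap\Sigma|$, contradicting minimality. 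For a circle of $A\cap\Sigma$ essential in $\Sigma - L$, an innermost disk in $A$ together with the two-strand restriction forces its boundary to bound a disk on $\Sigma$ meeting $L$ in at most two points, a configuration that can be removed by isotopy or absorbed into the annulus; because there are only two strands, this replaces the appeal to mp-smallness. The most delicate point will be ruling out persistent circles of $A\cap\Sigma$ parallel to the core of $A$, for which I would play the essentiality of $A$ in the exterior of $L$ against the minimality of $|A\cap\Sigma|$, showing that any such circle either compresses (impossible) or can be eliminated, thereby completing the reduction to the clean case.
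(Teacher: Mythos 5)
Your endgame is essentially the paper's: once the annulus meets $S$ only in spanning arcs, the rectangles exhibit the core in a non-minimal bridge position of $K$, the hypothesis gives a cancelling pair, and that pair is transferred to $L$. (Even there you are too quick: a cancelling pair for the induced position of $K$ is not automatically a ``local feature'' --- its disks may intersect the other component $K_2$, and the paper must explicitly push these intersections off along subrectangles of the annulus, nearest to $K_2$ first, before the pair works for $L$.) The genuine gap is the reduction to the clean case, which you correctly flag as the hard part but then dispose of with heuristics that do not survive scrutiny. The worst offender is your treatment of persistent circles of $A\cap S$. After making $A\cap B_2$ t-incompressible and t-$\partial$-incompressible, the Hayashi--Shimokawa classification leaves not only bridge disks but also properly embedded disks $C_j\subset B_2$ disjoint from $L$. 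Each such $c_j=\partial C_j$ is inessential in $A$ (otherwise $L$ is the unlink, a case handled separately), but the subdisk of $A$ it bounds is $C_j$ itself, which lies entirely in $B_2$; so swapping disks changes nothing and minimality of $|A\cap S|$ does \emph{not} eliminate these circles. ``Playing essentiality against minimality'' therefore fails exactly on the configurations that matter. The paper needs an entire section of new work here: it shows that some t-$\partial$-compression $\Delta_k$ on the $B_1$ side joins a $c_l$ to another boundary component, and then, via the inductively defined dual disks $U_i$, collections $\mathcal{U}_i$, and the band bookkeeping, it reorders the sequence of boundary compressions so that $\Delta_k$ can be performed first, reducing the number of $C_j$'s. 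Nothing in your sketch substitutes for this reordering argument, and ``only two strands'' is not the reason it works.

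Your dichotomy for boundary-parallel arcs is also false as stated. An outermost arc of $A\cap S$ cutting off a bridge disk is a perfectly stable feature of a minimal-intersection position --- it neither reduces $|A\cap S|$ nor need it pair up across $S$ with another outermost disk sharing an endpoint. Deciding whether such bridge disks assemble into a cancelling pair is precisely what the paper's Lemma~\ref{lem2}, Lemma~\ref{lem3}, and Section~\ref{sec6} do: one introduces an auxiliary complete bridge disk system $\mathcal{R}$ for $L\cap B_1$, eliminates b-parallel outermost arcs of $\mathcal{R}\cap(A\cap B_1)$ by re-choosing bridge disks, and then runs a case analysis on where the endpoints of an outermost arc land ($s_{i_1}$ versus $s_{i_2}$ at cyclic distance $>1$ gives Lemma~\ref{lem3} and a perturbation; otherwise one merges a $D_i$ with an $E_j$ and iterates until the rectangle decomposition appears). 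Without these steps your argument establishes the theorem only under the unproved assumption that $A$ can be isotoped so that $A\cap S$ consists of spanning arcs, which is the whole content of the result.
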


For the proof, we use the notion of t-incompressibility and t-$\partial$-incompressibility of \cite{Hayashi-Shimokawa}.
We isotope an annuls $A$ whose boundary is $L$ to a good position so that
it is t-incompressible and t-$\partial$-incompressible in one side, say $B_2$, of the bridge sphere.
Then $A \cap B_2$ consists of bridge disks and (possibly) properly embedded disks.
By using the idea of changing the order of t-$\partial$-compressions in \cite{Doll} or \cite{Hayashi-Shimokawa},
we show that in fact $A \cap B_2$ consists of bridge disks only.
Then by a further argument, we find a cancelling pair of disks for the bridge position.

\section{T-incompressible and t-$\partial$-incompressible surfaces in a $3$-ball}\label{sec2}

A {\em trivial tangle} $T$ is a union of properly embedded arcs $b_1, \ldots, b_n$ in a $3$-ball $B$ such that
each $b_i$ cobounds a disk $D_i$ with an arc $s_i$ in $\partial B$, and $D_i \cap (T - b_i) = \emptyset$.
By standard argument, $D_i$'s can be taken to be pairwise disjoint.

Let $F$ denote a surface in $B$ satisfying $F \cap (\partial B \cup T) = \partial F$.
A {\em t-compressing disk} for $F$ is a disk $D$ in $B - T$ such that $D \cap F = \partial D$ and
$\partial D$ is essential in $F$, i.e. $\partial D$ does not bound a disk in $F$.
A surface $F$ is {\em t-compressible} if there is a t-compressing disk for $F$, and
$F$ is {\em t-incompressible} if it is not t-compressible.

An arc $\alpha$ properly embedded in $F$ with its endpoints on $F \cap \partial B$, is {\em t-essential} if
$\alpha$ does not cobound a disk in $F$ with a subarc of $F \cap \partial B$.
In particular, an arc in $F$ parallel to a component of $T$ can be t-essential.
See Figure \ref{fig1}.
(Such an arc will be called {\em bridge-parallel} in Section \ref{sec3}.)
A {\em t-$\partial$-compressing disk} for $F$ is a disk $\Delta$ in $B - T$ such that
$\partial \Delta$ is an endpoint union of two arcs $\alpha$ and $\beta$, and
$\alpha = \Delta \cap F$ is t-essential, and $\beta = \Delta \cap \partial B$.
A surface $F$ is {\em t-$\partial$-compressible} if there is a t-$\partial$-compressing disk for $F$, and
$F$ is {\em t-$\partial$-incompressible} if it is not t-$\partial$-compressible.

\begin{figure}[!hbt]
\centering
\includegraphics[width=10cm,clip]{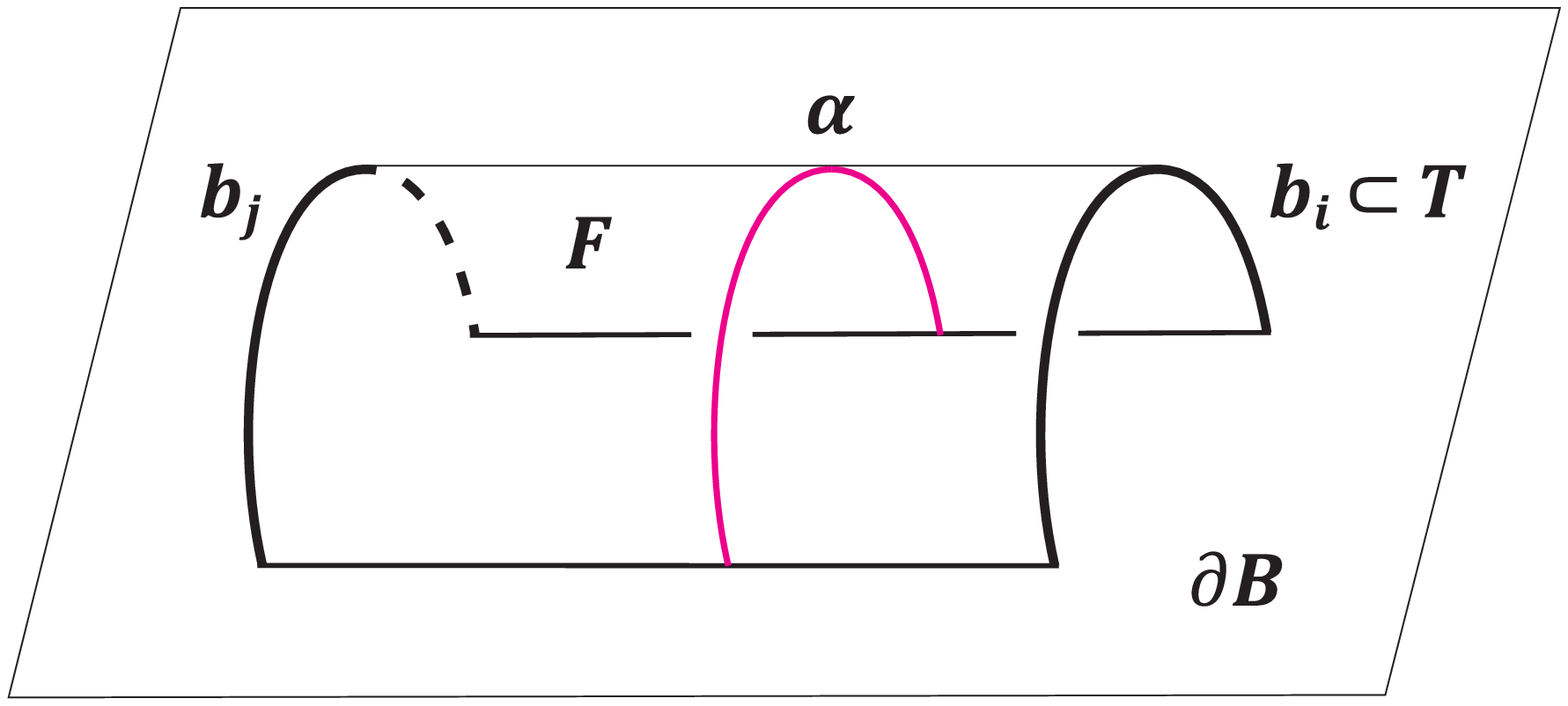}
\caption{A t-essential arc $\alpha$.}\label{fig1}
\end{figure}

Hayashi and Shimokawa \cite{Hayashi-Shimokawa} classified
t-incompressible and t-$\partial$-incompressible surfaces in a compression body in more general setting.
Here we give a simplified version of the theorem.

\begin{lemma}[\cite{Hayashi-Shimokawa}]\label{lem1}
Let $(B, T)$ be a pair of a $3$-ball and a trivial tangle in $B$, and
let $F \subset B$ be a surface satisfying $F \cap (\partial B \cup T) = \partial F \ne \emptyset$.
Suppose that $F$ is both t-incompressible and t-$\partial$-incompressible.
Then each component of $F$ is either
\begin{enumerate}
\item a disk $D_i$ cobounded by an arc $b_i$ of $T$ and an arc in $\partial B$ with $D_i \cap (T - b_i) = \emptyset$, or
\item a disk $C$ properly embedded in $B$ with $C \cap T = \emptyset$.
\end{enumerate}
\end{lemma}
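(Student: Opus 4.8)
The plan is to intersect $F$ with a complete system of bridge disks for $T$ and to simplify the intersection by cut-and-paste, using t-incompressibility to remove intersection circles and t-$\partial$-incompressibility to remove the ``removable'' intersection arcs. The arcs that cannot be removed are precisely the bridge-parallel ones, and tracking them is what produces the bridge-disk components of conclusion (1); everything that ends up disjoint from the disk system will be seen to give a trivial disk as in conclusion (2).

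To set up, let $D_1,\dots,D_n$ be the pairwise disjoint bridge disks supplied by the definition of a trivial tangle, and put $D=\bigcup_i D_i$. I record two facts. First, the exterior $M=B-\mathrm{int}\,N(T)$ is a genus-$n$ handlebody; in particular $B-T$ is irreducible, and the truncated disks $D_i\cap M$ form a complete meridian system cutting $M$ into a ball. Second, if a component of $\partial F$ meets an interior point of a strand $b_i$, then, since in $\partial B\cup T$ that point has a neighborhood equal to a subarc of $b_i$, that component must run along the whole of $b_i$; hence each strand of $T$ is either contained in $\partial F$ or disjoint from $F$. I then isotope $F$ so that it meets $D$ transversely in its interior with $|F\cap D|$ minimal.

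For the intersection circles I would run the standard innermost-disk argument: an innermost circle of $F\cap D$ bounds a subdisk $D'\subset D$ with $D'\cap F=\partial D'$ and $D'\cap T=\emptyset$, so $D'$ is a t-compressing disk unless $\partial D'$ is inessential in $F$; by t-incompressibility it is inessential, and since $B-T$ is irreducible the sphere formed by $D'$ and the subdisk it cobounds in $F$ lets me isotope $F$ to lower $|F\cap D|$, contradicting minimality. Hence $F\cap D$ consists of arcs. For the arcs I would take an arc $\alpha$ outermost on some $D_i$, cutting off a subdisk $\Delta\subset D_i$ with $\Delta\cap F=\alpha$ and the remaining boundary arc lying on $\partial D_i=b_i\cup s_i$. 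If that remaining arc lies on $s_i\subset\partial B$, then $\Delta\subset B-T$ is a t-$\partial$-compressing disk, so by t-$\partial$-incompressibility $\alpha$ is not t-essential; the disk it cobounds in $F$ then guides an isotopy reducing $|F\cap D|$, again a contradiction. Thus every surviving outermost arc has its outer boundary on $b_i$, i.e.\ is bridge-parallel.

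It remains to read off the conclusion. Cutting $M$ along the meridian system $\{D_i\cap M\}$ yields a ball $M^{*}$ in which $\widehat F=F\cap M$ is properly embedded and, for the components of $F$ not already equal to some $D_i$, disjoint from the cutting disks. In the ball $M^{*}$ an incompressible surface with boundary is a union of disks (a closed incompressible surface would be a sphere, which does not arise), and a boundary-compression of such a disk either runs into the $\partial B$-part of $\partial M^{*}$, excluded by t-$\partial$-incompressibility, or into the $\partial N(T)$-part, which is exactly the bridge-parallel situation recorded above. Following the bridge-parallel arcs back through the cut shows that a component whose boundary meets $T$ is isotopic to a single bridge disk $D_i$ as in (1), while a component disjoint from $T$ is a properly embedded disk $C$ with $C\cap T=\emptyset$ as in (2). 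I expect the delicate point to be this last bookkeeping: the boundary-incompressibility argument must carefully separate compressions toward $\partial B$ from those toward $\partial N(T)$, since it is precisely the allowance of $\partial N(T)$-parallel (bridge-parallel) arcs that distinguishes the t-conditions from ordinary incompressibility and that forces the two, and only two, types in the statement.
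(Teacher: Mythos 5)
First, a point of reference: the paper does not prove this lemma at all --- it is imported verbatim (in simplified form) from Hayashi--Shimokawa \cite{Hayashi-Shimokawa}, so there is no in-paper argument to compare against. Your strategy --- intersect $F$ with the complete bridge disk system, kill circles with t-incompressibility plus irreducibility of $B-T$, kill outermost arcs whose outer edge lies on $\partial B$ with t-$\partial$-incompressibility, and then read off the components after cutting along the meridian system --- is the standard route and is essentially what the cited source does. The circle argument and the treatment of components disjoint from $T$ (which end up disjoint from the disk system, hence properly embedded and incompressible in a ball, hence disks) are fine.

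However, there is a genuine gap, and it sits exactly where you flag it as ``bookkeeping'': the identification of the components whose boundary meets $T$ as single bridge disks is the actual content of the lemma and is not carried out. Two concrete problems. First, your outermost-arc dichotomy (outer edge on $s_i$ versus on $b_i$) is incomplete: when $b_i\subset\partial F$ the arcs of $F\cap D_i$ can have endpoints on $b_i$, so an outermost disk $\Delta\subset D_i$ may have its outer edge running through a corner of $D_i$ (partly on $b_i$, partly on $s_i$); such a $\Delta$ is not a t-$\partial$-compressing disk as defined, and you do not say what to do with it. Second, after the cut you assert that ``following the bridge-parallel arcs back through the cut'' shows a component meeting $T$ is a single $D_i$, but nothing you have established rules out, say, an annulus whose boundary contains a strand, or a disk whose boundary alternates between several strands and arcs of $\partial B$; gluing the disk pieces of $M^{*}$ back along more than one arc can produce exactly such surfaces. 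The missing ingredient is the one the paper goes out of its way to emphasize around Figure \ref{fig1}: if a component $F_0$ contains $b_i$ in its boundary but is \emph{not} the bridge disk for $b_i$, then the collar arc of $b_i$ in $F_0$ is a bridge-parallel, hence t-\emph{essential}, arc, and one must produce a t-$\partial$-compressing disk along it (using the minimized intersection with $D_i$) to get the contradiction. Without that step the two-type conclusion does not follow.
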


\section{Bridge position}\label{sec3}

Let $S$ be a $2$-sphere decomposing $S^3$ into two $3$-balls $B_1$ and $B_2$.
Let $K$ denote a knot (or link) in $S^3$.
Then $K$ is said to be in {\em bridge position} with respect to $S$ if $K \cap B_i$ $(i=1,2)$ is a trivial tangle.
Each arc of $K \cap B_i$ is called a {\em bridge}.
If the number of bridges of $K \cap B_i$ is $n$, we say that $K$ is in {\em $n$-bridge position}.
The minimum such number $n$ among all bridge positions of $K$ is called the {\em bridge number} $b(K)$ of $K$.
A bridge cobounds a {\em bridge disk} with an arc in $S$, whose interior is disjoint from $K$.
We can take a collection of $n$ pairwise disjoint bridge disks by standard argument, and
it is called a {\em complete bridge disk system}.
For a bridge disk $D$ in, say $B_1$,
if there exists a bridge disk $E$ in $B_2$ such that $D \cap E$ is a single point of $K$,
then $D$ is called a {\em cancelling disk}.
We call $(D, E)$ a {\em cancelling pair}.

A {\em perturbation} is an operation on an $n$-bridge position of $K$
that creates a new local minimum and local maximum in a small neighborhood of a point of $K \cap S$,
resulting in an $(n+1)$-bridge position of $K$.
A bridge position obtained by a perturbation admits a cancelling pair by the construction.
Conversely, it is known that a bridge position admitting a cancelling pair
is isotopic to one obtained from a lower index bridge position by a perturbation.
(See e.g. \cite[Lemma 3.1]{Scharlemann-Tomova}).
Hence, as a definition, we say that a bridge position is {\em perturbed} if it admits a cancelling pair.

Let $V_1$ be a standard solid torus in $S^3$ with core $\alpha$, and
$V_2$ be a solid torus in $S^3$ whose core is a knot $C$.
A meridian $m_1$ of $V_1$ is uniquely determined up to isotopy.
Let $l_1 \subset \partial V_1$ be a longitude of $V_1$ such that
the linking number $\textrm{lk} (l_1, \alpha) = 0$, called the {\em preferred longitude}.
Similarly, let $m_2$ and $l_2$ be a meridian and a longitude of $V_2$ respectively such that $\textrm{lk} (l_2, C) = 0$.
Take a $(p,q)$-torus knot (or link) $T_{p,q}$ in $\partial V_1$ that wraps $V_1$ longitudinally $p$ times;
more precisely, $|T_{p,q} \cap m_1| = p$ and $|T_{p,q} \cap l_1| = q$.
Let $h: V_1 \to V_2$ be a homeomorphism sending $m_1$ to $m_2$ and $l_1$ to $l_2$.
Then $K = h(T_{p,q}) \subset S^3$ is called a {\em $(p,q)$-cable} of $C$.
Concerning the bridge number, it is known that $b(K) = p \cdot b(C)$ \cite{Schubert}, \cite{Schultens}.

\vspace{0.2cm}

Let $K$ be a knot (or link) in $n$-bridge position with respect to a decomposition $S^3 = B_1 \cup_S B_2$,
so $K \cap B_1$ is a union of bridges $b_1, \ldots, b_n$.
Let $\mathcal{R} = R_1 \cup \cdots \cup R_n$ be a complete bridge disk system for $\bigcup b_i$,
where $R_i$ is a bridge disk for $b_i$.
Let $F$ be a surface bounded by $K$ and $F_1 = F \cap B_1$.
When we move $K$ to an isotopic bridge position, $F_1$ moves together.
We consider $\mathcal{R} \cap F_1$.
By isotopy we assume that in a small neighborhood of $b_i$, $\mathcal{R} \cap F_1 = b_i$.

An arc $\gamma$ in $F_1$ is {\em bridge-parallel} ({\em b-parallel} briefly) if
$\gamma$ is parallel, in $F_1$, to some $b_i$ and cuts off a rectangle $P$ from $F_1$
whose four edges are $\gamma$, $b_i$, and two arcs in $S$.
Let $\alpha$($\ne b_k$) denote an arc of $\mathcal{R} \cap F_1$ which is outermost in some $R_k$ and
cuts off the corresponding outermost disk $\Delta$ disjoint from $b_k$.
The following lemma will be used in Section \ref{sec6}.

\begin{lemma}\label{lem2}
After possibly changing $K$ to an isotopic bridge position, there is no $\alpha$ that is b-parallel.
\end{lemma}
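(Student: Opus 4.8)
The plan is to fix the bridge position up to isotopy and make $\mathcal{R} \cap F_1$ as simple as possible, then derive a contradiction from the existence of a b-parallel outermost arc. Concretely, I would minimize, over all bridge positions of $K$ isotopic to the given one (and over the auxiliary choices of $F$ rel $K$ and of the complete bridge disk system $\mathcal{R}$), the number of components of $\mathcal{R} \cap F_1$ other than the prescribed arcs $b_i$; call this the complexity. Along the way I would use innermost-disk isotopies to discard trivial circle components of $\mathcal{R} \cap F_1$ and re-establish the normalization that $\mathcal{R} \cap F_1 = b_i$ near each $b_i$, so that the remaining intersection consists of the $b_i$ together with a collection of arcs. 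The goal is then to show that in a complexity-minimizing position no outermost $\alpha \ne b_k$ of $R_k$ cutting off a disk $\Delta$ disjoint from $b_k$ can be b-parallel.

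Suppose, for contradiction, that such an $\alpha$ is b-parallel to some $b_i$ via a rectangle $P \subset F_1$ whose edges are $\alpha$, $b_i$, and two arcs of $S$. I would first record two disjointness facts. Since the bridge disks are pairwise disjoint, $\Delta \subset R_k$ is disjoint from every $R_j$ with $j \ne k$, and every intersection arc of $R_j \cap F_1$ is disjoint from $\alpha$ in $F_1$ (their crossings would lie in $R_j \cap R_k = \emptyset$); moreover, $\Delta$ being outermost in $R_k$, its interior is disjoint from $F_1$. Next, any arc of $\mathcal{R} \cap F_1$ whose interior meets $P$ must, by these facts and the normalization near $b_i$, enter and leave $P$ through its two $S$-edges; a standard innermost/minimality argument on $P$ then lets me assume $P$ is \emph{clean}, i.e. its interior is disjoint from $\mathcal{R} \cap F_1$. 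With these arrangements the disk $\Delta' := \Delta \cup_\alpha P$ is an embedded bridge disk for $b_i$, meeting $S$ in a single arc (the union of $\delta := \Delta \cap S$ with the two $S$-edges of $P$) and meeting $K$ only along $b_i$.

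I would then isotope $K$ by sliding the bridge $b_i$ across $\Delta'$, collapsing the rectangle $P$ inside $F_1$ and pushing across the outermost disk $\Delta \subset R_k$, so that $b_i$ comes to lie as a shallow bridge in a collar of $S$ near $\delta$. Because $\Delta'$ is a bridge disk and the move only flattens $b_i$ toward $S$, this is an ambient isotopy of $K$ that preserves the $n$-bridge position and the bridge number and does not change the topology of $F$. By the disjointness facts above, the swept region meets no other $R_j$ and re-crosses neither $R_k$ nor the cleaned rectangle $P$; hence, after a small normalizing push restoring $\mathcal{R} \cap F_1 = b_i$ near the relocated bridge, the arc $\alpha$ is removed and no new component of $\mathcal{R} \cap F_1$ is created. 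The complexity therefore strictly decreases, contradicting minimality, and so no outermost $\alpha$ is b-parallel, which is the assertion of the lemma.

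The step I expect to be the main obstacle is exactly the verification in the previous paragraph: that the move is a genuine isotopy of $K$ — not a boundary compression that would alter $F$ or lower the bridge number — and that it strictly lowers the complexity without introducing new intersections with $\mathcal{R}$. Controlling the interaction of the sweeping region with the rest of $\mathcal{R}$ and with $F_1$ is what forces the preliminary reductions (outermost $\Delta$, clean rectangle $P$, pairwise disjointness of the $R_j$), and it is here that the bookkeeping of reordering t-$\partial$-compressions from \cite{Doll} and \cite{Hayashi-Shimokawa} is the natural tool for making the reduction rigorous.
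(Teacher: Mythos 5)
Your argument is essentially the paper's: minimize $|\mathcal{R}\cap F_1|$ over isotopic bridge positions and choices of $F$ and $\mathcal{R}$, slide $b_i$ across the parallelism rectangle $P$ toward $\alpha$, replace $R_i$ by a push-off of the outermost disk $\Delta$, and contradict minimality since $\alpha$ disappears from the intersection. The only divergence is your preliminary ``cleaning'' of $P$, which is both unnecessary (arcs of $\mathcal{R}\cap F_1$ lying in $P$ are simply swept away by the slide, so the count still strictly drops) and not fully justified as stated, since an arc of $\mathcal{R}\cap F_1$ joining the two $S$-edges of $P$ is itself b-parallel to $b_i$ and cannot be removed by an innermost/outermost disk swap alone.
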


\begin{proof}
We isotope $K$ and $F_1$ and take $\mathcal{R}$ so that the minimal number of $| \mathcal{R} \cap F_1 |$ is realized.
Suppose that there is such an arc $\alpha$ which is parallel in $F_1$ to $b_i$ (same or not with $b_k$).
Isotope $b_i$ along $P$ to an arc parallel to $\alpha$ so that the changed surface $F'_1$ is disjoint from $\Delta$.
See Figure \ref{fig2}.
Take a new bridge disk $R'_i$ for $b_i$ to be a parallel copy of $\Delta$.
Other bridge disks $R_j$ ($j \ne i$) remain unaltered.
They are all mutually disjoint.
Hence $\mathcal{R}' = \mathcal{R} - R_i \cup R'_i$ is a new complete bridge disk system.
We see that $| \mathcal{R}' \cap F'_1 | < | \mathcal{R} \cap F_1 |$
since at least $\alpha$ no longer belongs to the intersection $\mathcal{R}' \cap F'_1$.
This contradicts the minimality of $| \mathcal{R} \cap F_1 |$.
\end{proof}

\begin{figure}[!hbt]
\centering
\includegraphics[width=8cm,clip]{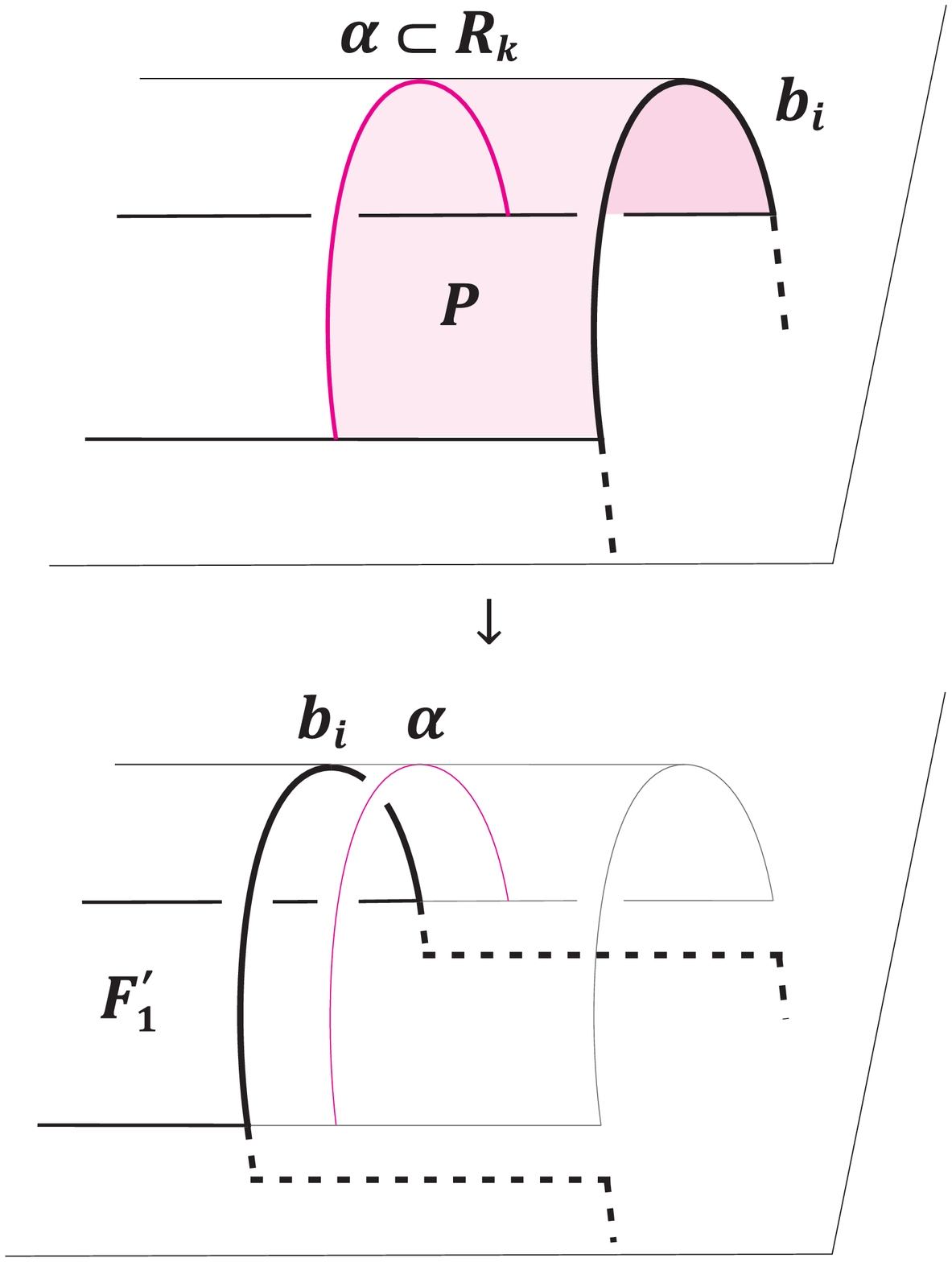}
\caption{Sliding $b_i$ along $P$.}\label{fig2}
\end{figure}

Now we consider a sufficient condition for a bridge position to be perturbed.

\begin{lemma}\label{lem3}
Suppose a separating arc $\gamma$ of $F \cap S$ cuts off a disk $\Gamma$ from $F$ such that
\begin{enumerate}
\item $\Gamma \cap B_1$ is a single disk $\Gamma_1$, and
\item $\Gamma \cap B_2$($\ne \emptyset$) consists of bridge disks $D_1, \ldots, D_k$.
\end{enumerate}
Then the bridge position of $K$ is perturbed.
\end{lemma}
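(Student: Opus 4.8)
The plan is to locate a cancelling pair directly, exploiting the combinatorial structure that the two hypotheses force on $\partial\Gamma$. First I would record the basic features of $\Gamma$: writing $\partial\Gamma = \gamma \cup \delta$ with $\gamma \subset S$ and $\delta \subset K$, the arc $\delta$ meets $B_2$ in $k$ bridges $b_1, \ldots, b_k$ (the minima bounding $D_1, \ldots, D_k$) and meets $B_1$ in the maximal arcs lying on $\partial\Gamma_1$. Since $\Gamma \subset F$ and $\partial F = K$, we have $\mathrm{int}\,\Gamma \cap K = \emptyset$, so any disk assembled from pieces of $\Gamma$ automatically has interior disjoint from $K$. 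Because $\Gamma\cap B_2 \ne \emptyset$ we have $k \ge 1$.

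Next I would analyse the boundary of the single disk $\Gamma_1$. Each bridge disk $D_j$ is cut off from $\Gamma$ by exactly one arc $c_j = D_j \cap \Gamma_1 \subset S$ (its boundary being one bridge plus one arc of $S$), so $\Gamma_1$ is a disk whose boundary runs alternately through the $k+1$ arcs $\gamma, c_1, \ldots, c_k$ on $S$ and the $k+1$ maximal arcs of $\delta \cap B_1$, which I label $M_0, \ldots, M_k$ in the cyclic order $\gamma, M_0, c_1, M_1, \ldots, c_k, M_k$. In particular every maximal arc $M_j$ is flanked on both sides by arcs of $S$, and each $M_j$ meets, at a point of $K \cap S$, the minimum of an adjacent bridge disk. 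Each $M_j$ is itself a bridge of the trivial tangle $K \cap B_1$, hence admits a bridge disk.

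With this in hand I would produce the cancelling pair. Fix the outermost maximum $M_0$, flanked by $\gamma$ and $c_1$; it shares the point $C = M_0 \cap b_1 \in K \cap S$ with the minimum $b_1$ of $D_1$, and $\{C\}$ is its only intersection with $b_1$ since $M_0 \subset B_1$ and $b_1 \subset B_2$. I would let $D$ be the sub-disk of $\Gamma_1$ cobounded by $M_0$ and an arc $\sigma_0$ joining the two endpoints of $M_0$ and running parallel to $M_0$ just inside $\Gamma_1$, and then isotope $\sigma_0$ onto $S$ (its endpoints already lie on $\gamma$ and $c_1$), pushing it off $c_1$ except at $C$. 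This realises a genuine bridge disk $D$ for $M_0$ in $B_1$ whose interior, inherited from $\Gamma_1$, is disjoint from $K$. Since $D \subset B_1$ and $D_1 \subset B_2$ meet $S$ only along their spanning arcs, and these arcs meet exactly at $C$, we get $D \cap D_1 = \{C\}$, a single point of $K$; hence $(D, D_1)$ is a cancelling pair and the bridge position of $K$ is perturbed.

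The main obstacle is the construction of the bridge disk $D$ in this last step: I must isotope $\sigma_0$ onto $S$ while keeping $D$ embedded, keeping $\mathrm{int}\,D$ disjoint from all of $K$ (not merely from $\delta$), and ensuring that the resulting spanning arc meets $c_1$ only at $C$, so that $D \cap D_1$ is exactly one point rather than an arc or several points. Controlling these intersections---by taking $\sigma_0$ sufficiently close to $M_0$ and pushing it down to $S$ on the side away from $D_1$---is the technical heart of the argument; everything else is bookkeeping about the alternating boundary pattern of $\Gamma_1$, and any of the maxima $M_0, \ldots, M_k$ could serve equally well.
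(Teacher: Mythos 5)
The gap is at the last step, where you convert the collar of $M_0$ in $\Gamma_1$ into a bridge disk by ``isotoping $\sigma_0$ onto $S$.'' That step is not a routine perturbation; it is essentially the whole content of the lemma, and it is unjustified. An embedded disk $D$ with $\partial D = M_0 \cup \sigma_0$, with $\mathrm{int}\, D$ disjoint from $K$ and $\mathrm{int}\,\sigma_0$ in the interior of $B_1$, exists for \emph{every} properly embedded arc in a ball (take a collar inside a regular neighborhood), even a locally knotted one, so the mere existence of this bigon carries no information. What you actually need is an ambient isotopy, rel $M_0$, carrying $\sigma_0$ into $S$ through the complement of the \emph{entire} tangle $K \cap B_1$, and that is a global condition. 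The hypotheses place no restriction on how $\Gamma_1$ sits in $B_1$ relative to the bridges of $K \cap B_1$ that do not lie on $\partial \Gamma$ (recall $\Gamma$ is only the piece of $F$ cut off by one arc $\gamma$; $K \cap B_1$ generally has many more bridges than $M_0, \ldots, M_k$). The disk $\Gamma_1$, and hence the collar of $M_0$ inside it, may wind around those other strands, in which case $\sigma_0$ cannot be pushed down to $S$ without crossing $K$. Taking $\sigma_0$ close to $M_0$ inside $\Gamma_1$ does not help, since the obstruction is not located near $M_0$. A second, related problem is that even if the push-down were possible, nothing controls where the resulting shadow arc $D \cap S$ lands, so you cannot conclude that it meets $c_1$ only at $C$ and avoids the other $c_j$ and the rest of $K \cap S$; $D \cap D_1$ could be several points or an arc.

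These two difficulties are precisely what the paper's proof is organized around. It takes honest, pairwise disjoint bridge disks $R_1, \ldots, R_{k+1}$ for the bridges of $\Gamma_1$ (these exist because $K \cap B_1$ is a trivial tangle), examines the intersection pattern $\mathcal{R} \cap \Gamma_1$, removes circles and inessential arcs, and then runs an outermost-arc argument, split according to whether all remaining arcs are b-parallel, to assemble a cancelling disk as the union of an outermost subdisk of some $R_j$ with a rectangle in $\Gamma_1$. If your push-down were automatic, none of that case analysis would be needed. To repair your argument you must either prove that the collar of $M_0$ in $\Gamma_1$ is boundary-parallel rel $M_0$ in the complement of $K \cap B_1$ with controlled shadow (which amounts to reproving the lemma), or start from genuine bridge disks and control their intersections with $\Gamma_1$ as the paper does.
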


\begin{proof}
Let $b_i$ ($i=1, \ldots, k$) denote the bridge for $D_i$ and $s_i = D_i \cap S$.
Let $r_1, \ldots, r_{k+1}$ denote the bridges contained in $\Gamma_1$.
We assume that $r_i$ is adjacent to $b_{i-1}$ and $b_i$.
See Figure \ref{fig3}.
Let $\mathcal{R} = R_1 \cup \cdots \cup R_{k+1}$ be a union of disjoint bridge disks, where
$R_i$ is a bridge disk for $r_i$.
In the following argument, we consider $\mathcal{R} \cap \Gamma_1$ except for $r_1 \cup \cdots \cup r_{k+1}$.

\begin{figure}[!hbt]
\centering
\includegraphics[width=14cm,clip]{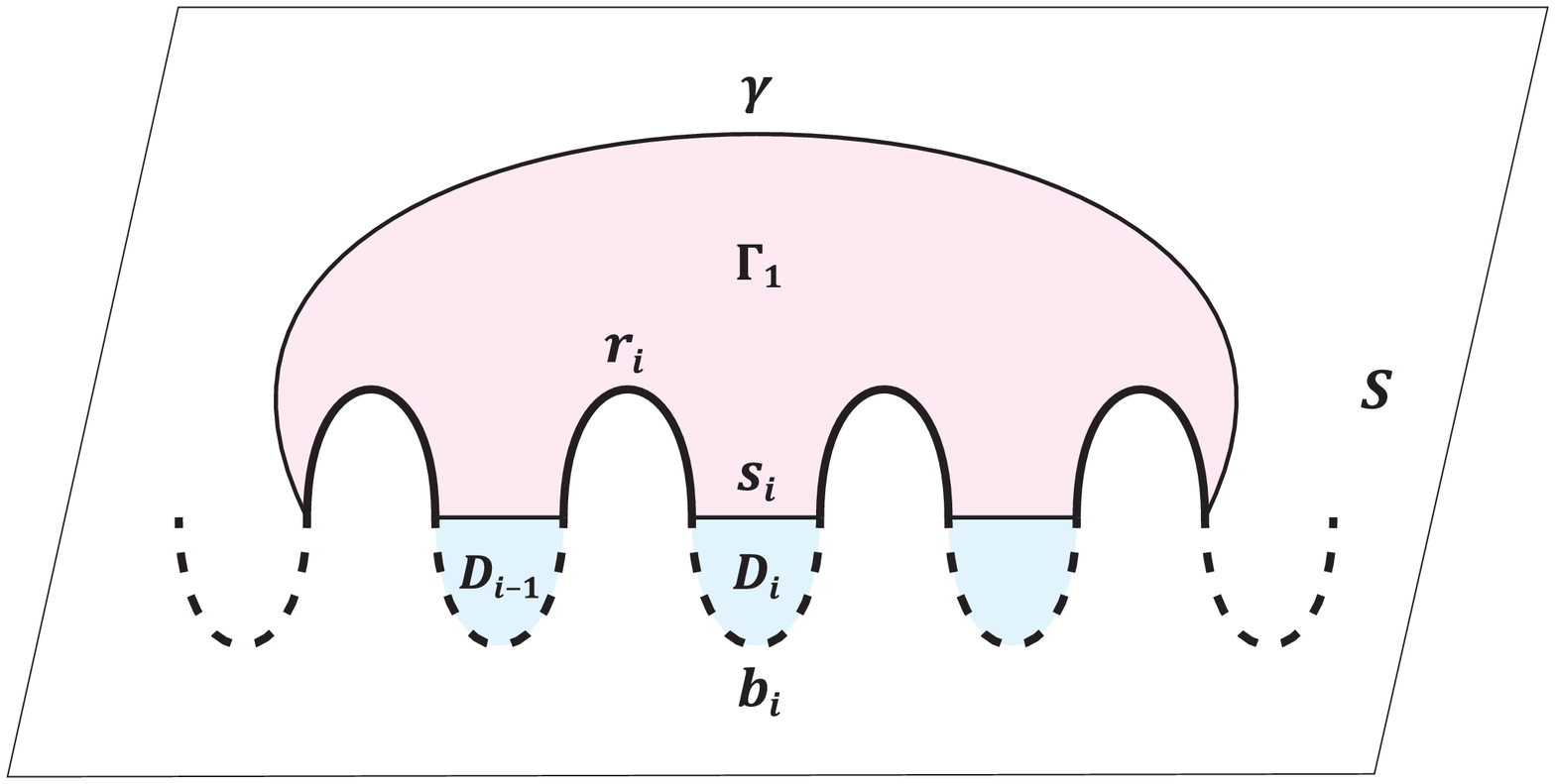}
\caption{The disk $\Gamma_1$ and bridge disks $D_i$'s.}\label{fig3}
\end{figure}

Suppose there is a circle component of $\mathcal{R} \cap \Gamma_1$.
Let $\alpha$($\subset R_i \cap \Gamma_1$) be one which is innermost in $\Gamma_1$ and
$\Delta$ be the innermost disk that $\alpha$ bounds.
Let $\Delta'$ be the disk that $\alpha$ bounds in $R_i$.
Then by replacing $\Delta'$ with $\Delta$, we can reduce $| \mathcal{R} \cap \Gamma_1 |$.
So we assume that there is no circle component of $\mathcal{R} \cap \Gamma_1$.

Suppose there is an arc component of $\mathcal{R} \cap \Gamma_1$ with both endpoints on the same arc of $\Gamma_1 \cap S$.
Let $\alpha$($\subset R_i \cap \Gamma_1$) be one which is outermost in $\Gamma_1$ and
$\Delta$ be the corresponding outermost disk in $\Gamma_1$ cut off by $\alpha$.
The arc $\alpha$ cuts $R_i$ into two disks and let $\Delta'$ be one of the two disks that does not contain $r_i$.
By replacing $\Delta'$ with $\Delta$, we can reduce $| \mathcal{R} \cap \Gamma_1 |$.
So we assume that there is no arc component of $\mathcal{R} \cap \Gamma_1$ with both endpoints on the same arc of $\Gamma_1 \cap S$.

If $\mathcal{R} \cap \Gamma_1 = \emptyset$, then $(R_i, D_i)$ is a desired cancelling pair and
the bridge position of $K$ is perturbed.
So we assume that $\mathcal{R} \cap \Gamma_1 \ne \emptyset$.

\vspace{0.2cm}

Case $1$. Every arc of $\mathcal{R} \cap \Gamma_1$ is b-parallel.

Let $\alpha$ be an arc of $\mathcal{R} \cap \Gamma_1$ which is outermost in some $R_j$ and
$\Delta$ be the outermost disk that $\alpha$ cuts off from $R_j$.
In addition, let $\alpha$ be b-parallel to $r_i$ via a rectangle $P$.
Then $P \cup \Delta$ is a new bridge disk for $r_i$, and
$(P \cup \Delta, D_i)$ or $(P \cup \Delta, D_{i-1})$ is a cancelling pair.

\vspace{0.2cm}

Case $2$. There is a non-b-parallel arc of $\mathcal{R} \cap \Gamma_1$.

Consider only non-b-parallel arcs of $\mathcal{R} \cap \Gamma_1$.
Let $\beta$ denote one which is outermost in $\Gamma_1$ among them and
$\Gamma_0$ denote the outermost disk cut off by $\beta$.
Because there are at least two outermost disks, we take $\Gamma_0$ such that $\partial \Gamma_0$ contains some $s_i$.
Let $r_l, \ldots, r_m$ be the bridges contained in $\Gamma_0$ and
$\mathcal{R}' = R_l \cup \cdots \cup R_m$.
In the following, we consider $\mathcal{R}' \cap \Gamma_0$ except for $r_l \cup \cdots \cup r_m$ and $\beta$.
If $\mathcal{R}' \cap \Gamma_0 = \emptyset$, then there exists a cancelling pair $(R_i, D_i)$.
Otherwise, every arc of $\mathcal{R}' \cap \Gamma_0$ is b-parallel.
Let $\alpha$ be an arc of $\mathcal{R}' \cap \Gamma_0$ which is outermost in some $R_j$ and
$\Delta$ be the outermost disk that $\alpha$ cuts off from $R_j$.
In addition, let $\alpha$ be b-parallel to $r_i$ via a rectangle $P$.
Then $P \cup \Delta$ is a new bridge disk for $r_i$, and
$(P \cup \Delta, D_i)$ or $(P \cup \Delta, D_{i-1})$ is a cancelling pair.
\end{proof}

\begin{figure}[!hbt]
\centering
\includegraphics[width=10.5cm,clip]{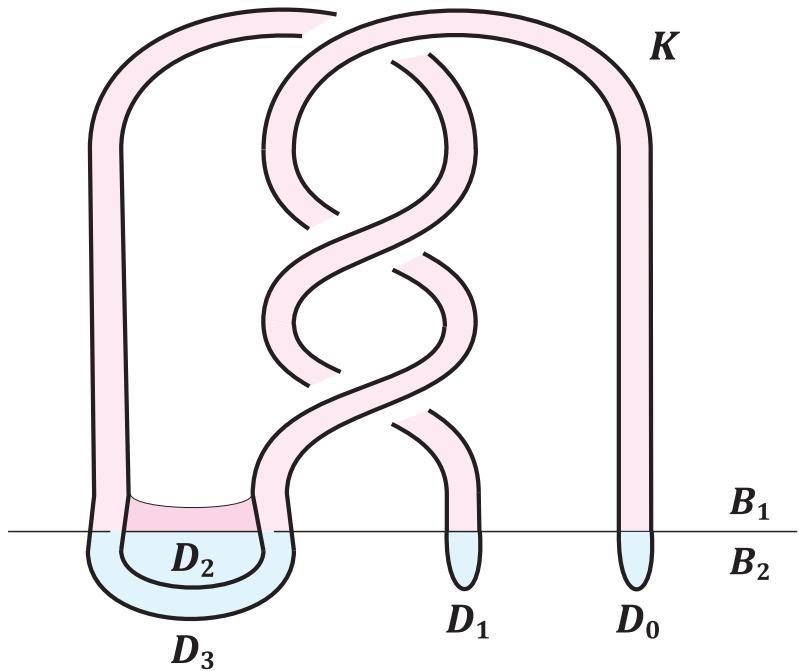}
\caption{The disks $D_0$ and $D_1$ are cancelling disks, whereas $D_2$ and $D_3$ are not.}\label{fig4}
\end{figure}

\begin{remark}
Let $K$ be an unknot in $n$-bridge position,
with $K \cap B_1 = r_0 \cup r_1 \cup \cdots \cup r_{n-1}$ and $K \cap B_2 = b_0 \cup b_1 \cup \cdots \cup b_{n-1}$.
We assume that the bridge $r_i$ ($i=0,1,\ldots,n-1$) is adjacent to $b_{i-1}$ and $b_i$, where
we consider the index $i$ modulo $n$.
Let $R_i$ and $D_i$ denote bridge disks for $r_i$ and $b_i$ respectively.
Then $\mathcal{C} = R_0 \cup D_0 \cup \cdots \cup R_{n-1} \cup D_{n-1}$
is called a {\em complete cancelling disk system} if
each $(R_i, D_{i-1})$ and each $(R_i, D_i)$ is a cancelling pair.

Let $D$ denote a disk bounded by $K$.
By following the argument of the proof of Theorem \ref{thm1},
we can assume that $D \cap B_2$ consists of bridge disks $D_0, \ldots, D_{n-1}$ and
$D \cap B_1$ is a single disk, as in \cite{Hayashi-Shimokawa}.
Then if $n > 1$, the bridge position of $K$ admits a cancelling pair by Lemma \ref{lem3},
giving a proof of the uniqueness of bridge position of the unknot.
One may hope that $D_0 \cup \cdots \cup D_{n-1}$ extends to a complete cancelling disk system.
But when $n \ge 4$, there exists an example such that
$D_0 \cup \cdots \cup D_{n-1}$ does not extend to a complete cancelling disk system,
as expected in \cite[Remark 1.2]{Hayashi-Shimokawa}.
Some $D_i$ is even not a cancelling disk.
This issue was related to one of the motivations for the present work.
In Figure \ref{fig4}, $K$ is an unknot in $4$-bridge position bounding a disk $D$ and
$D \cap B_2 = D_0 \cup D_1 \cup D_2 \cup D_3$.
Each of the disks $D_0$ and $D_1$ is a cancelling disk.
However, $D_2$ and $D_3$ are not cancelling disks because, say for $D_2$,
an isotopy of $b_2$ along $D_2$ and then slightly into $B_1$ does not give a $3$-bridge position of $K$
(see \cite{Scharlemann-Tomova}, \cite{Lee}).
\end{remark}

\section{Proof of Theorem \ref{thm1}: First step} \label{sec4}

Let $K$ be a knot such that every non-minimal bridge position of $K$ is perturbed.
Let $L$ be a $(2,2q)$-cable link of $K$, with components $K_1$ and $K_2$.
Suppose that $L$ is in non-minimal bridge position with respect to a bridge sphere $S$ bounding $3$-balls $B_1$ and $B_2$.
Each $L \cap B_i$ $(i=1,2)$ is a trivial tangle.
Since $L$ is a $2$-cable link, $L$ bounds an annulus, denoted by $A$.
We take $A$ so that $|A \cap S|$ is minimal.

\begin{claim}\label{claim1}
One of the following holds.
\begin{itemize}
\item $L$ is the unlink in a non-minimal bridge position, hence perturbed.
\item $A \cap B_2$ is t-incompressible in $B_2$.
\end{itemize}
\end{claim}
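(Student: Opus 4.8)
The plan is to prove the dichotomy in contrapositive form: assuming $A\cap B_2$ is t-compressible, I will show that $L$ is the unlink, and then appeal to the known uniqueness of bridge positions of the unlink to conclude it is perturbed. So write $A_2=A\cap B_2$ and suppose $A_2$ admits a t-compressing disk $D$, with $\partial D$ essential in $A_2$. Being a simple closed curve in $A_2$ disjoint from $\partial A_2$, $\partial D$ lies in the interior of $A_2$, hence in the interior of the annulus $A$, away from both $L$ and $S$. Since $B_2$ is a ball and $\partial D$ lies in its interior, I may isotope $D$ rel $\partial D$ (keeping it off $L$) so that its interior is disjoint from $S$; then $D\cap A=\partial D$ and $D$ is an honest compressing disk for the annulus $A$ in $S^3$. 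As $A$ is an annulus, $\partial D$ is either core-parallel or bounds a disk $E$ in $A$.

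First I would rule out the second possibility using the minimality of $|A\cap S|$. The point is the standard fact that in a position realizing the minimum of $|A\cap S|$, no circle of $A\cap S$ bounds a disk in $A$: an innermost such circle would cut off a subdisk $E_0\subset A$ whose interior misses $S$, so $E_0$ lies in a single ball and, being disjoint from $L$, can be pushed across $S$ to delete at least one intersection, contradicting minimality. Now if $\partial D$ bounded a disk $E\subset A$, then since $\partial D$ is essential in $A_2$ we cannot have $E\subset A_2$, so $E$ meets $S$; as $E$ is disjoint from $L$, every component of $E\cap S$ is a circle, and an innermost one bounds a subdisk of $E\subset A$. That is precisely a circle of $A\cap S$ bounding a disk in $A$, contradicting minimality. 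Hence $\partial D$ must be core-parallel in $A$.

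It then remains only to compress. The curve $\partial D$ separates $A$ into two annuli, one cobounded by $K_1$ and $\partial D$ and the other by $K_2$ and $\partial D$; capping each with a parallel copy of $D$ yields two disjoint disks bounded by $K_1$ and $K_2$. Thus $K_1$ and $K_2$ bound disjoint disks in $S^3$, so $L$ is the $2$-component unlink, and in its non-minimal bridge position it is perturbed (cf.\ \cite{Otal1}). This establishes the first alternative and completes the dichotomy. Having secured t-incompressibility of $A_2$ in the second alternative is exactly what will later license the application of Lemma~\ref{lem1}.

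I expect the delicate point to be the first paragraph's reduction rather than the final compression: one must make sure the t-compressing disk really can be taken disjoint from $S$ and, more essentially, that the trivial case for $\partial D$ is genuinely obstructed by the minimality of $|A\cap S|$ and is not merely shuffled around by reisotoping $A$. In other words, the crux is to see that t-compressibility of the piece $A_2$ cannot be an artifact of the intersection pattern of $A$ with $S$, but forces a global compression of the cabling annulus---which exists only for the unlink.
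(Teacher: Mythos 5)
Your overall structure matches the paper's: split on whether $\partial D$ is essential or inessential in the annulus $A$, and in the essential case compress to get disjoint disks bounded by $K_1$ and $K_2$, so that $L$ is the unlink and the first alternative holds. That case is fine (though to conclude ``perturbed'' for a non-minimal bridge position of the \emph{unlink} one should first split it into bridge positions of unknot components, as the paper does via \cite{Bachman-Schleimer}, before invoking \cite{Otal1}).

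The genuine gap is in your inessential case. You reduce it to the ``standard fact'' that in a position minimizing $|A\cap S|$ no circle of $A\cap S$ bounds a disk in $A$, and you justify that fact by saying an innermost such subdisk $E_0$, lying in one ball and disjoint from $L$, ``can be pushed across $S$.'' That step is not valid as stated: $E_0$ is a properly embedded disk in $B_i$ disjoint from the trivial tangle $L\cap B_i$, but its boundary may be an essential curve in the punctured sphere $S-L$ (e.g.\ separating the bridges of $B_i$ into two nonempty families), in which case neither complementary disk of $\partial E_0$ in $S$ misses $L$ and no isotopy pushes $E_0$ across $S$ without crossing $L$; even when $\partial E_0$ does bound a disk in $S-L$, other sheets of $A$ may lie in the parallelism region and obstruct the isotopy. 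So minimality of $|A\cap S|$ is not contradicted by your argument, and the ``standard fact'' is doing unearned work. The paper sidesteps all of this with a single surgery you already have the ingredients for: if $\partial D$ bounds a disk $E$ in $A$, then (since $\partial D$ is essential in $A\cap B_2$) the interior of $E$ must meet $S$, and replacing $E$ by the t-compressing disk $D$ itself---which lies in the interior of $B_2$, misses $L$, and meets $A$ only in $\partial D$---produces a new embedded annulus $A'$ bounded by $L$ with $|A'\cap S|<|A\cap S|$, contradicting minimality directly. Replace your innermost-circle detour with this swap and the proof closes.
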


\begin{proof}
Suppose that $A \cap B_2$ is t-compressible.
Let $\Delta$ be a t-compressing disk for $A \cap B_2$ and let $\alpha = \partial \Delta$.
Let $F$ be the component of $A \cap B_2$ containing $\alpha$.

Case $1$. $\alpha$ is essential in $A$.

A t-compression of $A$ along $\Delta$ gives two disjoint disks bounded by $K_1$ and $K_2$ respectively.
Then $L$ is an unlink.
Since the complement of an unlink has a reducing sphere,
by \cite{Bachman-Schleimer} a bridge position of an unlink is a split union of bridge positions of unknot components.
Since a non-minimal bridge position of the unknot is perturbed, we see that $L$ is perturbed.

Case $2$. $\alpha$ is inessential in $A$.

Let $\Delta '$ be the disk that $\alpha$ bounds in $A$.
Then $(\textrm{Int} \, \Delta ') \cap S \ne \emptyset$, since otherwise $\alpha$ is inessential in $F$.
By replacing $\Delta '$ of $A$ with $\Delta$, we get a new annulus $A'$ bounded by $L$ such that
$| A' \cap S | < | A \cap S |$, contrary to the minimality of $| A \cap S |$.
\end{proof}

Since our goal is to show that the bridge position of $L$ is perturbed, from now on we assume that $L$ is not the unlink.
By Claim \ref{claim1}, $A \cap B_2$ is t-incompressible in $B_2$.
If $A \cap B_2$ is t-$\partial$-compressible in $B_2$, we do a t-$\partial$-compression.

\begin{claim}\label{claim2}
A t-$\partial$-compression preserves the t-incompressibility of $A \cap B_2$.
\end{claim}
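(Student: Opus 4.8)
The plan is to argue by contradiction, converting a hypothetical t-compressing disk for the surface produced by the t-$\partial$-compression back into an obstruction for the original surface. Write $F = A \cap B_2$ and $T = L \cap B_2$, and let $F'$ be the result of a t-$\partial$-compression of $F$ along a disk $\Delta$ with $\partial \Delta = \alpha \cup \beta$, where $\alpha = \Delta \cap F$ is t-essential and $\beta = \Delta \cap \partial B_2$. Suppose for contradiction that $F'$ is t-compressible, and let $D$ be a t-compressing disk for $F'$, so that $\partial D$ is a simple closed curve that is essential (does not bound a disk) in $F'$. The goal is to derive from $D$ either a t-compressing disk for $F$, contradicting the t-incompressibility of $F$ established in Claim \ref{claim1}, or a disk in $F'$ bounded by $\partial D$, contradicting the essentiality of $\partial D$.

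The first step is to make $D$ disjoint from $\Delta$. Both $D$ and $\Delta$ lie in $B_2 - T$, so a standard innermost-disk and outermost-arc argument carried out entirely in $B_2 - T$ removes all intersections: an innermost circle of $D \cap \Delta$ on $\Delta$ bounds a subdisk of $\Delta \subset B_2 - T$ that can be used to surger $D$ while keeping a t-compressing disk and lowering $|D \cap \Delta|$, and the remaining arcs are removed by the corresponding outermost-arc moves. Since $F$ and $F'$ coincide outside a neighborhood of $\Delta \cup \alpha$, after this isotopy the disk $D$ lies in the region where $F$ and $F'$ agree, and $\partial D$ is a single simple closed curve lying simultaneously on $F$ and on $F'$, with $D \cap F = \partial D$.

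Now I would split into two cases according to the behavior of $\partial D$ in $F$. If $\partial D$ is essential in $F$, then $D$ is itself a t-compressing disk for $F$, contradicting Claim \ref{claim1}. Otherwise $\partial D$ bounds a disk $D_F$ in $F$. Here is the point where the hypothesis on $\alpha$ enters: because $\partial D = \partial D_F$ is an interior curve disjoint from $\partial F$, the disk $D_F$ meets $\partial F$ nowhere, whereas the arc $\alpha$ has its endpoints on $F \cap \partial B_2 \subset \partial F$. Since $\partial D$ is disjoint from $\alpha$, the arc $\alpha$ lies on one side of $\partial D$; if it lay in $D_F$ its endpoints would be interior to $D_F$ and hence off $\partial F$, which is impossible, so $\alpha$ lies entirely outside $D_F$. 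Therefore the whole t-$\partial$-compression takes place away from $D_F$, and $D_F$ persists unchanged as a disk in $F'$ bounded by $\partial D$, contradicting the essentiality of $\partial D$ in $F'$. Either case yields a contradiction, so $F'$ is t-incompressible.

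The step I expect to demand the most care is precisely this second case: one must rule out that the disk $D_F$ cut off in $F$ secretly contains the scar of the t-$\partial$-compression, for if it did, $\partial D$ could genuinely be inessential in $F$ yet essential in $F'$. The resolution rests on the feature emphasized in Section \ref{sec2}, that the t-essential arc $\alpha$ terminates on $\partial F$ while an essential closed curve $\partial D$ does not; making this separation rigorous (and, before it, verifying that the innermost/outermost cleanup genuinely yields $D \cap \Delta = \emptyset$ without destroying the t-compressing disk) is where the bookkeeping concentrates.
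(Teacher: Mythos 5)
Your proposal is correct and follows essentially the same route as the paper: isotope the putative t-compressing disk $D$ off the t-$\partial$-compressing disk $\Delta$ and the region swept out by the compression, then convert $D$ into a t-compressing disk for the original $A\cap B_2$. Your Case 2 (ruling out that $\partial D$ becomes inessential in $F$ by noting that $\alpha$ has endpoints on $\partial F$ while the disk $D_F$ is disjoint from $\partial F$) is a correct and worthwhile elaboration of a point the paper leaves implicit.
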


\begin{proof}
Let $\Delta$ be a t-$\partial$-compressing disk for $A \cap B_2$.
Suppose that the surface after the t-$\partial$-compression along $\Delta$ is t-compressible.
A t-compressing disk $D$ can be isotoped to be disjoint from two copies of $\Delta$ and the product region $\Delta \times I$.
Then $D$ would be a t-compressing disk for $A \cap B_2$ before the t-$\partial$-compression, a contradiction.
\end{proof}

A t-$\partial$-compression simplifies a surface because it cuts the surface along a t-essential arc.
So if we maximally t-$\partial$-compress $A \cap B_2$, we obtain a t-$\partial$-incompressible $A \cap B_2$.
Note that the effect on $A$ of a t-$\partial$-compression of $A \cap B_2$ is
just pushing a neighborhood of an arc in $A$ into $B_1$,
which is called {\em an isotopy of Type $A$} in \cite{Jaco}.
After a maximal sequence of t-$\partial$-compressions,
$A \cap B_2$ is both t-incompressible and t-$\partial$-incompressible by Claim \ref{claim2}.
Then by applying Lemma \ref{lem1},

\begin{itemize}
\item[($\ast$)] $A \cap B_2$ consists of bridge disks $D_i$'s and properly embedded disks $C_j$'s.
\end{itemize}

%Either the Euler characteristic of each component after the t-$\partial$-compression is increased, or
%an $n$-gon of a $\partial$-component of $A \cap B_2$ is changed to an $m$-gon $(m < n)$.

\section{Proof of Theorem \ref{thm1}: T-$\partial$-compression and its dual operation} \label{sec5}

Take an annulus $A$ bounded by $L$ so that $(\ast )$ holds and the number $m$ of properly embedded disks $C_j$ is minimal.
In this section, we will show that $m=0$, i.e. $A \cap B_2$ consists of bridge disks only.

Suppose that $m > 0$.
Then $A \cap B_1$ is homeomorphic to an $m$-punctured annulus.
A similar argument as in the proof of Claim \ref{claim1} leads to that $A \cap B_1$ is t-incompressible.
By Lemma \ref{lem1} again, $A \cap B_1$ is t-$\partial$-compressible.
We can do a sequence of t-$\partial$-compressions on $A \cap B_1$ until it becomes t-$\partial$-incompressible.
Note that the t-incompressibility of $A \cap B_1$ is preserved.

Now we are going to define a t-$\partial$-compressing disk $\Delta_i$ ($i=0,1, \ldots, s$ for some $s$) and
its {\em dual disk} $U_{i+1}$ inductively.
Let $A_0 = A$.
Let $\Delta_0$ be a t-$\partial$-compressing disk for $A_0 \cap B_1$ and
$\alpha_0 = \Delta_0 \cap A_0$ and $\beta_0 = \Delta_0 \cap S$.
By a t-$\partial$-compression along $\Delta_0$,
a neighborhood of $\alpha_0$ is pushed along $\Delta_0$ into $B_2$ and thus a band $b_1$ is created in $B_2$.
Let $A_1$ denote the resulting annulus bounded by $L$.
Let $U_1$ be a dual disk for $\Delta_0$, that is,
a disk such that an isotopy of Type A along $U_1$ recovers a surface isotopic to $A_0$.
For the next step, let $\mathcal{U}_1 = U_1$.

Let $\Delta_1$ be a t-$\partial$-compressing disk for $A_1 \cap B_1$ and
$\alpha_1 = \Delta_1 \cap A_1$ and $\beta_1 = \Delta_1 \cap S$.
After a t-$\partial$-compression along $\Delta_1$, a band $b_2$ is created in $B_2$.
Let $A_2$ denote the resulting annulus bounded by $L$.
There are three cases to consider.

Case $1$. $\beta_1$ intersects the arc $\mathcal{U}_1 \cap S$ more than once.

The band $b_2$ cuts off small disks $U_{2,1}, U_{2,2}, \ldots, U_{2,k_2}$ from $\mathcal{U}_1$,
which are mutually parallel along the band.
We designate any one among the small disks, say $U_{2,1}$, as the dual disk $U_2$.
Let $\mathcal{R}_2 = \bigcup^{k_2}_{j=2} U_{2,j}$ be the union of others.

Case $2$. $\beta_1$ intersects $\mathcal{U}_1 \cap S$ once.

We take the subdisk that $b_2$ cuts off from $\mathcal{U}_1$ as the dual disk $U_2$, and
let $\mathcal{R}_2 = \emptyset$ in this case.

Case $3$. $\beta_1$ does not intersect $\mathcal{U}_1 \cap S$.

We take a dual disk $U_2$ freely, and let $\mathcal{R}_2 = \emptyset$ in this case.

In any case, let $\mathcal{U}_2 = \mathcal{U}_1 \cup U_2 - \mathrm{int}\, \mathcal{R}_2$.

\vspace{0.2cm}

In general, assume that $A_i$ and $\mathcal{U}_i$ are defined.
Let $\Delta_i$ be a t-$\partial$-compressing disk for $A_i \cap B_1$ and
$\alpha_i = \Delta_i \cap A_i$ and $\beta_i = \Delta_i \cap S$.
After a t-$\partial$-compression along $\Delta_i$, a band $b_{i+1}$ is created in $B_2$.
Let $A_{i+1}$ denote the resulting annulus bounded by $L$.

{\bf Case a}. $\beta_i$ intersects the collection of arcs $\mathcal{U}_i \cap S$ more than once.

The band $b_{i+1}$ cuts off small disks $U_{i+1,1}, U_{i+1,2}, \ldots, U_{i+1,k_{i+1}}$ from $\mathcal{U}_i$,
which are mutually parallel along the band.
We designate any one among the small disks, say $U_{i+1,1}$, as the dual disk $U_{i+1}$.
Let $\mathcal{R}_{i+1} = \bigcup^{k_{i+1}}_{j=2} U_{i+1,j}$ be the union of others.

{\bf Case b}. $\beta_i$ intersects $\mathcal{U}_i \cap S$ once.

We take the subdisk that $b_{i+1}$ cuts off from $\mathcal{U}_i$ as the dual disk $U_{i+1}$, and
let $\mathcal{R}_{i+1} = \emptyset$ in this case.

{\bf Case c}. $\beta_i$ does not intersect $\mathcal{U}_i \cap S$.

We take a dual disk $U_{i+1}$ freely, and let $\mathcal{R}_{i+1} = \emptyset$ in this case.

In any case, let $\mathcal{U}_{i+1} = \mathcal{U}_i \cup U_{i+1} - \mathrm{int}\, \mathcal{R}_{i+1}$.

\vspace{0.2cm}

Later, we do isotopy of Type A, dual to the t-$\partial$-compression, in reverse order along $U_{i+1}, U_i, \ldots, U_1$.
Let us call it {\em dual operation} for our convenience..
Let $b_{i+1}$ be the band mentioned above,
cutting off $U_{i+1,1}, U_{i+1,2}, \ldots, U_{i+1,k_{i+1}}$ from $\mathcal{U}_i$ (in {\bf Case a}).
When the dual operation along $U_{i+1}$ is done,
we modify every $U_j$ and $\mathcal{U}_j$ ($j \le i$) containing any $U_{i+1, s}$ ($s > 1)$ of $\mathcal{R}_{i+1}$,
by replacing each $U_{i+1,s}$ ($s > 1$) with
the union of a subband of $b_{i+1}$ between $U_{i+1,s}$ and $U_{i+1}$($= U_{i+1,1}$) and a copy of $U_{i+1}$, and
doing a slight isotopy.
We remark that, although it is not illustrated in Figure \ref{fig5},
some $U_j$'s and $\mathcal{U}_j$'s temporarily become immersed
when the subband passes through some removed region, say $U_{r,s}$ ($s > 1$).
But the $U_{r,s}$ ($s > 1$) is also modified as we proceed the dual operations, and
the $U_j$'s and $\mathcal{U}_j$'s again become embedded.
(In Figure \ref{fig5} and Figure \ref{fig6}, the dual operation along $U_{i+1}$ is done, and
the dual operation along $U_i$ is not done yet.)
Actually, before the sequence of dual operations,
$U_j$'s and $\mathcal{U}_j$'s ($j \le k$) are modified in advance so that
$\mathcal{U}_k$ is disjoint from the union of certain band $b_{k+1}$ and a disk $C_l$
(which will be explained later).
See Figure \ref{fig6}.

\begin{figure}[!hbt]
\centering
\includegraphics[width=13cm,clip]{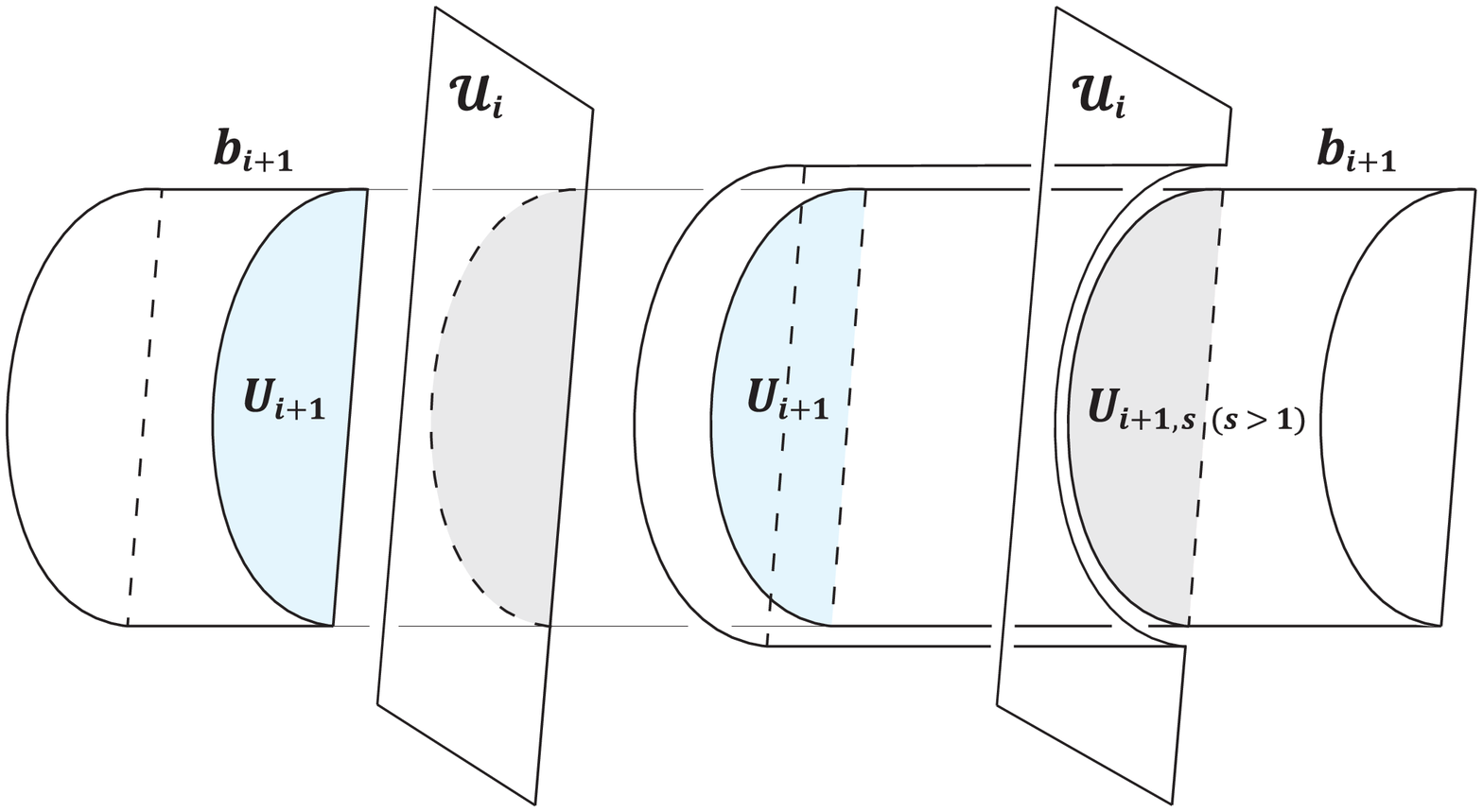}
\caption{Modifying $U_j$'s and $\mathcal{U}_j$'s ($j \le i$) containing any $U_{i+1,s}$ ($s > 1$),
after the dual operation along $U_{i+1}$ and before the dual operation along $U_i$.}\label{fig5}
\end{figure}

\begin{figure}[!hbt]
\centering
\includegraphics[width=14.9cm,clip]{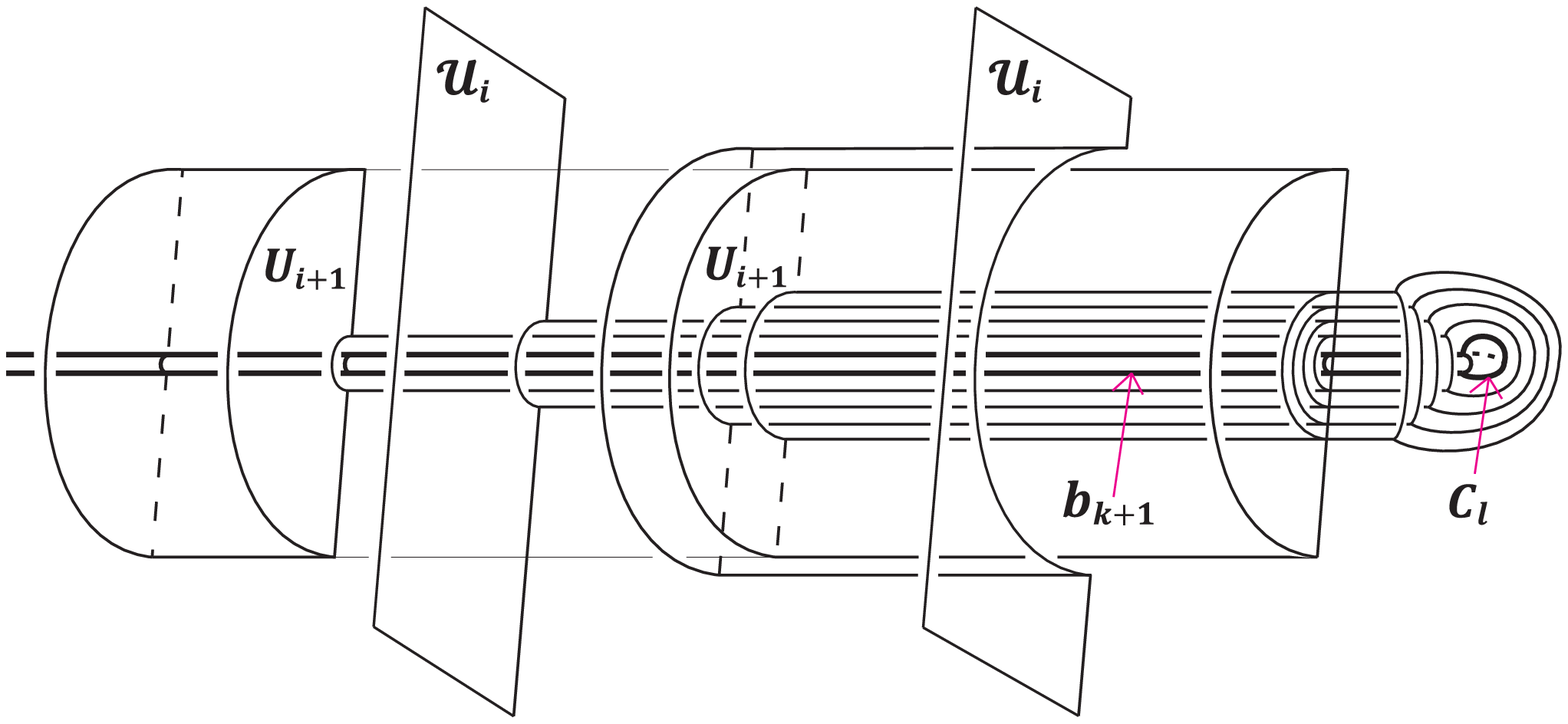}
\caption{Making $\mathcal{U}_k$ to be disjoint from $b_{k+1} \cup C_l$.}\label{fig6}
\end{figure}

The t-$\partial$-compressing disk $\Delta_i$ is taken to be disjoint from two copies of $\Delta_j$ ($j < i$).
Moreover, for every $i$ we can draw $\alpha_i$ on $A$($= A_0$).
Since the t-$\partial$-compression along $\Delta_i$ is equivalent to cutting along $\alpha_i$ and
the sequence $\Delta_0, \Delta_1, \ldots, \Delta_s$ is maximal,
every $c_j = \partial C_j$ is incident to some $\alpha_i$.

\begin{claim}\label{claim3}
For each $c_j$, there exists an $\alpha_i$ such that $\alpha_i$ connects $c_j$ to other component of $A \cap S$.
\end{claim}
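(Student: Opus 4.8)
The plan is to argue by contradiction using the minimality of $m$. Fix a circle $c_j$ and suppose, contrary to the claim, that every arc $\alpha_i$ incident to $c_j$ has both of its endpoints on $c_j$; I will call such an arc a \emph{loop} at $c_j$. The first observation is that $c_j = \partial C_j$, where $C_j$ is a component of $A \cap B_2$, so $C_j$ is exactly the disk cut off by $c_j$ in $A$ itself; in particular $c_j$ is inessential in $A$, the disk it bounds is $C_j$, and $\mathrm{int}\, C_j$ is disjoint from $S$. Consequently, removing $c_j$ from $A \cap S$ amounts to isotoping the disk $C_j$ across $S$ from $B_2$ into $B_1$, and such an isotopy, once available, yields an annulus $A'$ bounded by $L$ for which $(\ast)$ still holds (the remaining components of $A \cap B_2$ are unchanged, hence still t-incompressible and t-$\partial$-incompressible) but with one fewer properly embedded disk, contradicting the minimality of $m$. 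Thus it suffices to show that the loop hypothesis makes this isotopy available.

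To produce the isotopy I would exhibit $C_j$ as boundary-parallel across $S$, that is, find a disk $C_j' \subset B_1$ with $\partial C_j' = c_j$ such that $C_j \cup C_j'$ bounds a ball meeting neither $L$ nor the remaining sheets of $A$. For this I would read off the structure of the maximally t-$\partial$-compressed surface. Since the sequence $\Delta_0, \ldots, \Delta_s$ is maximal, cutting $A \cap B_1$ along the disjoint arcs $\alpha_0, \ldots, \alpha_s$ yields, by Lemma \ref{lem1}, a union of disks, and after the t-$\partial$-compressions each $\alpha_i$ comes to lie along $\beta_i \subset S$. Under the loop hypothesis no arc leaves $c_j$, so on the side of $c_j$ not containing $\partial A = L$ the boundary circle $c_j$ together with the loops at $c_j$ bounds a subsurface whose entire boundary lies on $S$; tracing an innermost such region gives a properly embedded disk in $B_1$ whose boundary is $c_j$ (or a circle isotopic to $c_j$ across the collar of $c_j$). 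Taking this disk as $C_j'$ and using that $C_j$ is disjoint from $L$, I would check that the region between $C_j$ and $C_j'$ is a product, so that the push of $C_j$ into $B_1$ is unobstructed.

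Carrying out this push then decreases $m$ while preserving $(\ast)$, contradicting minimality, and therefore forces some arc $\alpha_i$ incident to $c_j$ to have its other endpoint on a different component of $A \cap S$, which is the claim. The hard part will be the geometric step of the second paragraph: converting the purely combinatorial loop condition into an actual product region between $C_j$ and a $B_1$-disk, i.e. verifying that no other sheet of $A$ and no strand of $L$ threads between them. This is delicate because the disked-up surface lives on the modified annulus $A_{s+1}$, whose $B_2$ side carries the bands $b_i$, so one must control the interaction with the compression order and with the dual disks $U_i$. I expect to manage this by choosing both $c_j$ and the innermost region as small as possible and by invoking the disjointness already arranged for the $\Delta_i$ and the $\mathcal{U}_k$ (the $\Delta_i$ taken disjoint from copies of earlier $\Delta_j$, and each $\mathcal{U}_k$ made disjoint from $b_{k+1} \cup C_l$), which confines all potential interference away from the innermost region and makes the required parallelism transparent.
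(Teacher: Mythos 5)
Your overall strategy differs from the paper's, and it has a genuine gap at exactly the step you flag as ``the hard part.'' The paper's proof is a one-step contradiction with Lemma \ref{lem1}: after the \emph{maximal} sequence of t-$\partial$-compressions, $A\cap B_1$ cut along $\alpha_0,\ldots,\alpha_s$ must be a union of disks; but if every $\alpha_i$ incident to $c_j$ is a loop at $c_j$, then in the incidence graph (vertices the components of $A\cap S$ together with $\partial A$, edges the arcs $\alpha_i$) the vertex $c_j$ is isolated, so no component of the cut-up surface can contain pieces of both $c_j$ and of the other boundary components while being a disk whose boundary alternates between sub-arcs of $\partial(A\cap B_1)$ and copies of the $\alpha_i$; since the $m$-punctured annulus $A\cap B_1$ is connected, some non-disk component survives, contradicting Lemma \ref{lem1}. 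You actually write down both ingredients of this argument (maximality forces all disks via Lemma \ref{lem1}; every arc at $c_j$ is a loop) but do not combine them into the contradiction; instead you try to use the all-disks structure constructively to build a parallelism, which is a much harder road.

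Concretely, two things go wrong on your road. First, the geometric picture in your second paragraph is off: the side of $c_j$ in $A$ not containing $\partial A=L$ is precisely the disk $C_j\subset B_2$, whose interior is disjoint from $S$ and which contains no $\alpha_i$ at all; the loops at $c_j$ all lie in $A\cap B_1$, on the side of $c_j$ that \emph{does} contain $L$. So the ``subsurface whose entire boundary lies on $S$'' bounded by $c_j$ and the loops is not where you place it, and the ``innermost region'' you trace does not obviously yield a properly embedded disk in $B_1$ with boundary $c_j$. Second, even granting such a disk $C_j'$, the claim that the region between $C_j$ and $C_j'$ is a product disjoint from $L$ and from the other sheets of $A$ is the entire content of your argument, and you explicitly defer it, appealing only to disjointness conventions for the $\Delta_i$ and $\mathcal{U}_k$ that were set up for a different purpose (the reordering argument of Claim \ref{claim4}, not Claim \ref{claim3}). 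Since the loop hypothesis is in fact already self-contradictory, there is no way to ``verify'' the product structure by direct construction without first extracting the contradiction; the connectivity argument above is the missing step, and once you have it the isotopy of $C_j$ into $B_1$ is not needed.
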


\begin{proof}
Suppose that there exists a $c_j$ which is not connected to other component of $A \cap S$.
That is, for such $c_j$, every $\alpha_i$ incident to $c_j$ connects $c_j$ to itself.
Then after a maximal sequence of t-$\partial$-compressions on $A$, some non-disk components will remain.
This contradicts Lemma \ref{lem1}.
\end{proof}

Let $k$ be the smallest index such that $\alpha_k$ connects some $c_j$, say $c_l$,
to other component (other $c_j$ or $D_i \cap S$).
If $k = 0$, then by a t-$\partial$-compression along $\Delta_0$,
either $C_l$ and other $C_j$ are merged into one properly embedded disk, or
$C_l$ and a bridge disk are merged into a new bridge disk.
This contradicts the minimality of $m$.
So we assume that $k \ge 1$.

Suppose that we performed t-$\partial$-compressions along $\Delta_0, \Delta_1, \ldots, \Delta_k$.
Consider the small disks that the band $b_{k+1}$ cuts off from $\mathcal{U}_k$.
They are parallel along $b_{k+1}$.
We replace the small disks one by one, the nearest one to $C_l$ first, so that
$\mathcal{U}_k$ is disjoint from $b_{k+1} \cup C_l$.
Let $\Delta$ be the small disk nearest to $C_l$.
Let $\Delta '$ be the union of a subband of $b_{k+1}$ and $C_l$
that $\Delta \cap b_{k+1}$ cuts off from $b_{k+1} \cup C_l$.
For every $U_j$ and $\mathcal{U}_j$ ($j \le k$) containing $\Delta$, we replace $\Delta$ with $\Delta '$.
Then again let $\Delta$ be the (next) small disk nearest to $C_l$ and we repeat the above operation until
$\mathcal{U}_k$ is disjoint from $b_{k+1} \cup C_l$.

Now we do the dual operation on $A_k$ in reverse order along $U_k, U_{k-1}, \ldots, U_1$.
Let $A'_{i-1}$ ($i=1, \ldots, k$) be the resulting annulus after the dual operation along $U_i$.
The shape of the dual disk $U_k$ is possibly changed but
the number of circle components and arc components of $A'_{k-1} \cap S$ is
same with those of $A_{k-1} \cap S$.
After the dual operation along $U_k$,
it is necessary to modify some $U_j$'s and $\mathcal{U}_j$'s ($j \le k-1$) further as in Figure \ref{fig6}
so that $U_{k-1}$ is disjoint from $b_{k+1} \cup C_l$.
In this way, we do the sequence of dual operations, and
the number of circle components of $A'_0 \cap S$ is also $m$.
Then because $b_{k+1}$ is disjoint from $U_1, \ldots, U_k$,
we can do the t-$\partial$-compression of $A'_0$ along $\Delta_k$ first and
the number $m$ of properly embedded disks $C_j$ is reduced, contrary to our assumption.

We have shown the following claim.

\begin{claim}\label{claim4}
$A \cap B_2$ consists of bridge disks.
\end{claim}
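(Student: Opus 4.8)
The plan is to argue by contradiction against the minimality of $m$. Suppose $m > 0$, so that $A \cap B_2$ contains at least one properly embedded disk $C_j$ and correspondingly $A \cap B_1$ is an $m$-punctured annulus. First I would check that $A \cap B_1$ is t-incompressible by the same reasoning used in Claim~\ref{claim1}: an essential compression would split $L$ into an unlink (already handled), while an inessential one would let me reduce $|A \cap S|$, contradicting its minimality. Since $A \cap B_1$ is not a union of disks of the two types allowed by Lemma~\ref{lem1} (it is a punctured annulus with $m>0$), that lemma forces it to be t-$\partial$-compressible. I would then fix a maximal sequence of t-$\partial$-compressing disks $\Delta_0, \ldots, \Delta_s$, each taken disjoint from two copies of the earlier ones, after which $A \cap B_1$ is t-$\partial$-incompressible, hence a union of disks. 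Each compression along $\Delta_i$ is recorded on the original annulus $A$ as cutting along an arc $\alpha_i$, and pushes a neighborhood of $\alpha_i$ into $B_2$ as a band $b_{i+1}$.

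The structural input is that the arcs $\alpha_i$ must eventually dismantle $A$ into disks. In particular, for each boundary circle $c_j = \partial C_j$ there has to be some $\alpha_i$ joining $c_j$ to a different component of $A \cap S$; otherwise every $\alpha_i$ meeting $c_j$ would join $c_j$ to itself and a non-disk piece would survive the maximal sequence, contradicting Lemma~\ref{lem1}. This is exactly the content of Claim~\ref{claim3}. Let $k$ be the least index for which $\alpha_k$ connects some $c_l$ to another component. The case $k=0$ is immediate: the very first t-$\partial$-compression creates a band in $B_2$ joining $C_l$ either to another $C_j$, producing a single properly embedded disk, or to a bridge disk, producing a new bridge disk. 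In either case the number of properly embedded disks drops, contradicting the minimality of $m$. So I may assume $k \ge 1$.

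The crux is the case $k \ge 1$, where the compression $\Delta_k$ that would merge $C_l$ is not available at the start of the sequence. My plan here is to reorder the compressions so that $\Delta_k$ can be performed first. To do this I would attach to each $\Delta_i$ a dual disk $U_{i+1}$ along which an isotopy of Type~A (a \emph{dual operation}) undoes the corresponding t-$\partial$-compression, and track the running union $\mathcal{U}_i$, splitting into the three cases according to whether the compressing band $\beta_i$ meets $\mathcal{U}_i \cap S$ more than once, once, or not at all. Before undoing, I would first modify the $U_j$ and $\mathcal{U}_j$ ($j \le k$) so that the band $b_{k+1}$ produced by $\Delta_k$, together with $C_l$, is pushed off all the dual disks, replacing each small disk cut off by $b_{k+1}$ with a copy of the nearest one joined through a subband. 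Then I would run the dual operations in reverse order $U_k, U_{k-1}, \ldots, U_1$, re-adjusting the remaining dual disks after each step, until reaching an annulus $A'_0$ isotopic to $A$ and still carrying exactly $m$ boundary circles on $S$. Because $b_{k+1}$ has been made disjoint from $U_1, \ldots, U_k$, the disk $\Delta_k$ survives this undoing, so I can finally perform the t-$\partial$-compression along $\Delta_k$ on $A'_0$ first; as in the case $k=0$ this merges $C_l$ with a neighboring component and reduces $m$, the desired contradiction.

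The hard part is precisely this reordering bookkeeping. I expect the main obstacle to be controlling the dual disks under the reverse operations: the collections $\mathcal{U}_j$ temporarily become immersed when a subband is pushed through a previously removed region, and one must verify that subsequent modifications make them embedded again, that the band $b_{k+1}$ stays disjoint from every $U_j$ throughout, and, crucially, that the number of circle components of $A'_0 \cap S$ remains $m$ so that the final compression genuinely lowers it. This is exactly the ``changing the order of t-$\partial$-compressions'' device of Doll and of Hayashi and Shimokawa, and making it rigorous in the present cabling setting is where the real work lies; once it is in place, $m = 0$ follows and $A \cap B_2$ consists of bridge disks only, which is Claim~\ref{claim4}.
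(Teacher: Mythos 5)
Your proposal follows the paper's argument step for step: the same reduction to $k \ge 1$ via Claim~\ref{claim3}, the same dual-disk bookkeeping with the three cases for $\beta_i$ meeting $\mathcal{U}_i \cap S$, the same modification making $\mathcal{U}_k$ disjoint from $b_{k+1} \cup C_l$, and the same reordering of t-$\partial$-compressions to contradict the minimality of $m$. This is essentially identical to the paper's proof, including your correct identification of the immersed-disk and circle-count issues as the delicate points.
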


\section{Proof of Theorem \ref{thm1}: Finding a cancelling pair} \label{sec6}

By Claim \ref{claim4}, $A \cap B_2$ consists of bridge disks.
Let $d_0, \ldots, d_{k-1}$ and $e_0, \ldots, e_{l-1}$ be bridges of $K_1 \cap B_2$ and $K_2 \cap B_2$ respectively
that are indexed consecutively along each component.
Let $D_i \subset A \cap B_2$ ($i = 0, \ldots, k-1$) be the bridge disk for $d_i$ and $s_i = D_i \cap S$, and
let $E_j \subset A \cap B_2$ ($j = 0, \ldots, l-1$) be the bridge disk for $e_j$ and $t_j = E_j \cap S$.
%Throughout the argument, we take the indices $i$ and $j$ modulo $k$ and $l$ respectively.
Let $\mathcal{R} = R_1 \cup \cdots \cup R_{k+l}$ be a complete bridge disk system for $L \cap B_1$ and
let $F = A \cap B_1$.
We consider $\mathcal{R} \cap F$ except for the bridges $L \cap B_1$.

If there is an inessential circle component of $\mathcal{R} \cap F$ in $F$,
it can be removed by standard innermost disk argument.
If there is an essential circle component of $\mathcal{R} \cap F$ in $F$,
then $L$ would be the unlink as in Case $1$ of the proof of Claim \ref{claim1}.
So we assume that there is no circle component of $\mathcal{R} \cap F$.
If there is an inessential arc component of $\mathcal{R} \cap F$ in $F$
with both endpoints on the same $s_i$ (or $t_j$),
then the arc can be removed by standard outermost disk argument.
So we assume that there is no arc component of $\mathcal{R} \cap F$ with both endpoints on the same $s_i$ (or $t_j$).

If $\mathcal{R} \cap F = \emptyset$, we easily get a cancelling pair, say $(R_m, D_i)$ or $(R_m, E_j)$,
so we assume that $\mathcal{R} \cap F \ne \emptyset$.
Let $\alpha$ denote an arc of $\mathcal{R} \cap F$ which is outermost in some $R_m$ and
let $\Delta$ denote the outermost disk that $\alpha$ cuts off from $R_m$.
Applying Lemma \ref{lem2}, $\alpha$ is not b-parallel.
Suppose that one endpoint of $\alpha$ is in, say $s_{i_1}$, and the other is in $s_{i_2}$
with the cyclic distance $d(i_1, i_2) = \textrm{min} \{ |i_1 - i_2|, |k - (i_1 - i_2)| \}$ greater than $1$.
Then after the t-$\partial$-compression along $\Delta$,
we get a subdisk of $A$ satisfying the assumption of Lemma \ref{lem3}, hence $L$ is perturbed.
So without loss of generality, we assume that one endpoint of $\alpha$ is in $s_0$ and the other is in $t_0$.

Let $A_1$ be the annulus obtained from $A$ by the t-$\partial$-compression along $\Delta$ and $F_1 = A_1 \cap B_1$.
The bridge disks $D_0$ and $E_0$ are connected by a band, and
let $P_1$ be the resulting rectangle with four edges $d_0, e_0$, and two arcs in $S$, say $p_1, p_2$.
Let $\alpha_1$ denote an arc of $\mathcal{R} \cap F_1$ which is outermost in some $R_m$ and
let $\Delta_1$ denote the outermost disk cut off by $\alpha_1$.
If at least one endpoint of $\alpha_1$ is contained in $p_1$ or $p_2$,
or one endpoint of $\alpha_1$ is in $s_{i_1}$($t_{j_1}$ respectively) and
the other is in $s_{i_2}$($t_{j_2}$ respectively),
then similarly as above,
\begin{itemize}
\item either $\alpha_1$ is inessential with both endpoints on the same component of $F_1 \cap S$, or
\item $\alpha_1$ is b-parallel, or
\item Lemma \ref{lem3} can be applied.
\end{itemize}
Hence we may assume that one endpoint of $\alpha_1$ is in $s_i$ ($i \ne 0$) and the other is in $t_j$ ($j \ne 0$).
After the t-$\partial$-compression along $\Delta_1$, $D_i$ and $E_j$ are merged into a rectangle.
Arguing in this way, each $D_i$ ($i=0, \ldots, k-1$) is merged with some $E_j$ because of the fact that
$\mathcal{R} \cap s_i = \emptyset$ gives us a cancelling pair.
Moreover, we see that $k = l$.
After $k$ successive t-$\partial$-compressions on $A$,
the new annulus $A'$ intersects $B_1$ and $B_2$ alternately, in rectangles.

Note that $b(L) = 2 b(K)$ and $L$ is in $2k$-bridge position.
Since $L$ is in non-minimal bridge position, $k > b(K)$.
So by the assumption of the theorem, the bridge position of $K_i$ ($i=1,2$) is perturbed.
Let $(D, E)$ be a cancelling pair for $K_1$ with $D \subset B_1$ and $E \subset B_2$.
However, $D$ and $E$ may intersect $K_2$.
Let $P_i$ and $P_{i+1}$ be any adjacent rectangles of $A'$ in $B_1$ and $B_2$ respectively.
We remove any unnecessary intersection of $D \cap P_i$ and $E \cap P_{i+1}$, the nearest one to $K_2$ first,
by isotopies along subdisks of $P_i$ and $P_{i+1}$ respectively.
See Figure $7$ for an example.
Then $(D, E)$ becomes a cancelling pair for the bridge position of $L$ as desired.

\begin{figure}[!hbt]
\centering
\includegraphics[width=13cm,clip]{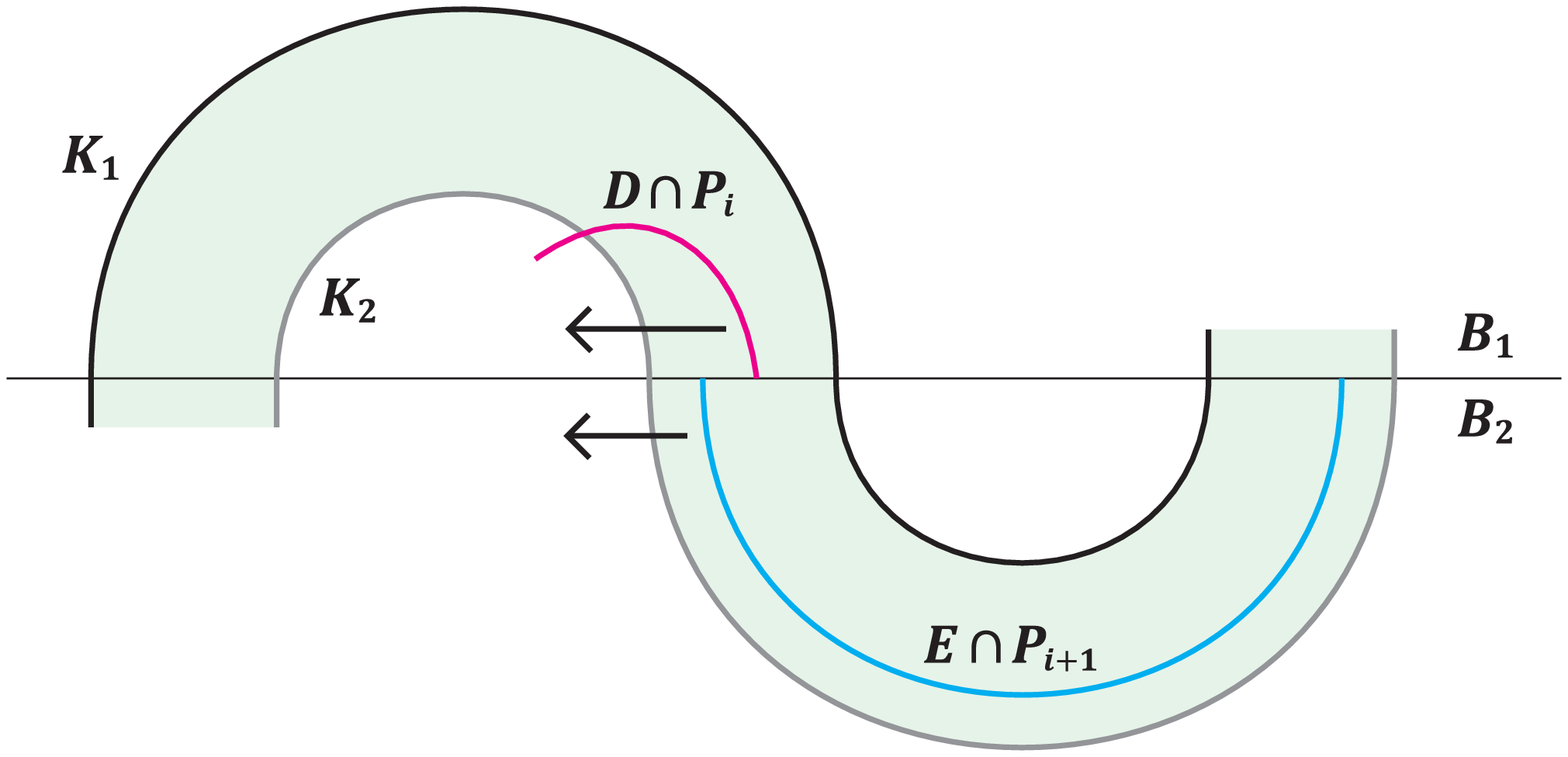}
\caption{Isotoping $D$ and $E$.}\label{fig7}
\end{figure}

\vspace{0.2cm}

{\noindent \bf Acknowledgments.}

The author was supported by the Basic Science Research Program through the National Research Foundation of Korea (NRF)
funded by the Ministry of Education (2018R1D1A1A09081849).

\end{document}